\def\R{\mathbb R}
\def\C{\mathbb C}
\def\S{\mathbb S}
\def\D{\mathrm D}
\def\j{\mathrm i}
\def\d{\,\mathrm d}
\DeclareMathOperator{\diag}{diag}
\let\Im\relax
\DeclareMathOperator{\Im}{Im}
\DeclareMathOperator{\spec}{spec}
\newtheorem{thm}{Theorem}[section]
\newtheorem{prop}[thm]{Proposition}
\newtheorem{cor}[thm]{Corollary}
\theoremstyle{remark}
\newtheorem{rem}[thm]{Remark}
\numberwithin{equation}{section}
\begin{document}
\title{On $t$-dependent hyperbolic systems. Part 2.}
\author{Jens Wirth}
\ead{jens.wirth@mathematik.uni-stuttgart.de}
\address{Institute of Analysis, Dynamics and Modelling; Department of Mathematics; University of Stuttgart; Pfaffenwaldring 57; 70569 Stuttgart; GERMANY}
\begin{abstract}
We consider hyperbolic equations with time-dependent coefficients and develop an abstract framework to derive the asymptotic behaviour of the representation of solutions for large times. We are dealing with generic situations where the large-time asymptotics is of hyperbolic type. 

Our approach is based on diagonalisation procedures combined with asymptotic integration arguments.
\end{abstract}
\maketitle

\section{Introduction}
In a first part \cite{RW11} of the present paper written jointly with M.~Ruzhansky  we developed tools to deal with generic hyperbolic systems with $t$-dependent coefficients based on natural assumptions and to derive energy and dispersive type estimates, but mainly focussed on the treatment of high frequencies and on estimating resulting Fourier integrals. The treatment of small frequencies in \cite{RW11} was quite basic and with assumptions tailored towards a simple and rough estimate. Here we will complement and extend this paper by giving an alternative and more systematic approach to the treatment of small frequencies.

In \cite{NW15} a new method was developed by the author jointly with W.~Nunes do Nascimento to describe the large-time behaviour of solutions to hyperbolic equations with time-dependent coefficients and to characterise it in terms of large-time principal symbols, see Sections~\ref{ssec:2.1.1}  and \ref{ssec:2.1.4} for an explanation. This technique will be developed further in this paper and in particular applied to a broad range of hyperbolic systems and equations with time-dependent coefficients. 

Before giving model cases and admissible examples for our treatment, we will recall a bit of the history of the underlying problem and the mathematical background.
Dispersive type estimates are an essential tool to study non-linear equations and to prove stability and scattering results, but they are also interesting on their own right due to their relation to Fourier restriction theorems and to decay properties of Fourier transforms of surface carried measures. For the wave equation, dispersive estimates go back to the work P.~Brenner \cite{Bre75} and R.S.~Strichartz \cite{Str70}. Higher order problems were first considered by M.~Sugimoto \cite{Sug94,Sug96} based on refined stationary phase estimates for slowness surfaces with critical points. This was the starting point for a systematic treatment of decay properties of solutions to general scalar higher order equations with constant coefficients and to hyperbolic systems with constant coefficients as carried out by M.~Ruzhansky and J.G.~Smith \cite{RS10}. Equations of second order and with time-dependent coefficients have been intensively studied in the literature, examples include the work of M.~Reissig, K.~Yagdjian and J.G.~Smith \cite{RY00, RY00b, RS05} or the authors own work on dissipative wave equations \cite{Wir06, HW08, Wir10}. There are also known results on hyperbolic systems with time-dependent coefficients or equations of higher order, see e.g., the work of M.~d'Abbico, S.~Lucente,
G.~Taglialatela, M.~Reissig and M.~Ebert \cite{dALT09, dAR11, dAE12}, and approaches based on asymptotic integration techniques by T.~Matsuyama and M.~Ruzhansky \cite{MR10, MR13, MR13b} with applications to the well-posedness of Kirchhoff equations. 

All these approaches are tailored to specific situations. This has the advantage of obtaining very precise conditions on coefficients and gives also the possibility of slight improvements of decay rates and / or assumptions on initial data. What we aim for in this paper and to contrast these earlier investigations is a general treatment of most of these cases allowing for a more structural understanding of the properties of such equations.

The structure of this paper is as follows. First we will explain several model cases and examples which can be treated by our methods. This will be done in Section~\ref{sec2} combined with the precise description of our basic assumptions. After this, Section~\ref{sec3} will deal with the construction of representation of solutions and Section~\ref{sec4} provides the resulting energy and dispersive type estimates. Finally, Section~\ref{sec5} comes back to the examples and explains what kind of results one can obtain in such particular circumstances. 
 
In order to simplify notation, we write $f\lesssim g$ for two functions $f$ and $g$ if there exists a constant $C$ such that $f\le C g$ uniform in the arguments. We also write $f\asymp g$ if $f\lesssim g$ and $g\lesssim f$. If $k$ is a parameter or one of the arguments, we write $f\lesssim_k g$ to emphasise that the constant depends on the parameter $k$. We use a similar notation for asymptotic inequalities, $f(t)\lesssim g(t),\;t\to\infty$, means that there exists a number $t_0$ such that $f\lesssim g$ on $\{t \; :\;t\ge t_0\}$.

\section{Model cases and basic assumptions}\label{sec2}

\subsection{Notation and motivating examples} We recall notation used in \cite{RW11} and \cite{RW14}. For $\ell\in\R$ we denote by 
\begin{equation}
    \mathcal T\{\ell\} = \left\{ f \in C^\infty(\R_+) \;\bigg|\; \left| \partial_t^k f(t)\right| \lesssim_k \left( \frac1{1+t}\right)^{\ell+k} \right\}
\end{equation} 
the set of smooth functions satisfying a suitable symbol like behaviour. 

We will treat the $\mathcal T$-classes as classes of coefficient functions for hyperbolic problems. To motivate the later treatment we consider first some examples. They all fit into the general scheme 
\begin{equation}\label{eq:hyp-sys-1}
   \D_t U = A(t,\D_x) U,\qquad U(0,\cdot)=U_0,
\end{equation}
of systems with time-dependent matrix valued Fourier multipliers $A(t,\D_x)$ satisfying uniform strict hyperbolicity assumptions combined a suitable estimates for the $t$-dependence. Precise conditions are given in Section~\ref{ssec:2.1.4}.

\subsubsection{Differential hyperbolic systems}\label{ssec:2.1.1}
For coefficient matrices $A_j\in\mathcal T\{0\}\otimes\C^{d\times d}$ and $B\in\mathcal T\{1\}\otimes\C^{d\times d}$ we consider the Cauchy problem
\begin{equation}\label{eq:d-hyp-sys-1}
    \D_t U = \sum_{j=1}^n A_j(t) \D_{x_j} U + B(t) U,\qquad\qquad U(0,\cdot)=U_0,
\end{equation}
where $\D=-\j\partial$, $x\in \R^n$ and $t\ge0$. In order for this system to be {\em uniformly strictly hyperbolic} we assume that the matrix-valued {\em hyperbolic principal symbol} 
\begin{equation}\label{eq:A-pol}
    A(t,\xi) = \sum_{j=1}^n A_j(t) \xi_j
\end{equation}
has for $\xi\in\S^{n-1}=\{\xi\in\R^n\;:\; |\xi|=1\}$ and $t\ge0$ uniformly distinct real eigenvalues. We assume further the following technical-looking condition on the existence of a uniformly bounded and invertible diagonaliser $M(t,\xi)$ of $A(t,\xi)$ such that
\begin{multline}\label{eq:GECL-hyp-1}
   \Im \int_s^t \diag(M^{-1}(\tau,\xi) B(\tau,\xi) M(\tau,\xi) + (\D_\tau M^{-1}(\tau,\xi))M(\tau,\xi) ) \d\tau \\ \asymp  ( \log(1+ t) - \log(1+s) )\mathrm I
\end{multline}
holds true uniformly in $t$, $s$ and $\xi$. It turns out that the leading terms of the integrand are independent of the particular choice of the diagonaliser $M$ and should be considered as a {\em hyperbolic subprincipal symbol} of the system.   
It is shown in \cite[Thm. 4.4]{RW14} that the boundedness of the left-hand side is equivalent to a form of generalised energy conservation for high frequencies. See also \cite{HW09} for a simpler case. The condition can be reduced to spectral conditions on the matrix $B$ if the all the $A_j$ are self-adjoint. 

Both conditions are sufficient to give a full description of the behaviour of the spatial Fourier transform $\widehat U(t,\xi)$ of solutions for $(1+t)|\xi| \gtrsim 1$. The large-time behaviour of solutions in the remaining part of the extended phase depends on a second principal symbol. Multiplying equation \eqref{eq:d-hyp-sys-1} by $t$ gives the Fuchs-type equation
\begin{equation}
       t \D_t U = \sum_{j=1}^n  tA_j(t)  \D_{x_j} U + tB(t) U,
\end{equation}
and we assume that 
\begin{equation}\label{eq:B-cond-hyp1}
    \lim_{t\to\infty} tB(t) = B_\infty \in \C^{d\times d},\qquad \int_1^\infty \| \tau B(\tau) - B_\infty \|^\sigma \frac{\d\tau}{\tau} < \infty
\end{equation}
for some diagonalisable matrix $B_\infty$ and a number $\sigma\ge 1$. The eigenvalues of $B_\infty$ determine the large-time behaviour of solutions for $(1+t)|\xi|\lesssim 1$. We refer to the matrix $B_\infty$ (or rather to the pair consisting of the eigenvalues and the eigenvectors, see Section~\ref{ssec:2.1.4}) as the 
{\em large-time principal symbol}  of the hyperbolic system \eqref{eq:d-hyp-sys-1}. Together with the constants in the estimates \eqref{eq:GECL-hyp-1} it determines the large-time behaviour for all frequencies.

\subsubsection{Hyperbolic systems of higher order}  Hyperbolic systems of higher order arise for example when studying wave phenomena in elasticity. We consider only a particular model
\begin{align}\label{eq:hyp-sys-2}
    \D_t^2 U &= \sum_{i,j=1}^n A_{i,j}(t) \D_{x_i}\D_{x_j} U + B_0(t) \D_t U + \sum_{j=1}^n B_j(t) \D_{x_j} U + C(t) U,\\ U(0,\cdot)&=U_0, \qquad \D_t U(0,\cdot) = U_1,
\end{align}
with coefficients $A_{i,j}\in \mathcal T \{ 0\}\otimes \C^{d\times d}$, $B_j\in\mathcal T\{1\}\otimes\C^{d\times d}$ and $C\in\mathcal T\{2\}\otimes\C^{d\times d}$ subject to natural conditions. First, we assume that the matrix-valued function
\begin{equation}\label{eq:2.9}
    A(t,\xi) = \sum_{i,j=1}^n A_{i,j}(t) \xi_i\xi_j
\end{equation}
has distinct and strictly positive real eigenvalues uniform with respect to $t\ge 0$ and $\xi\in\S^{n-1}$. 
We define the auxiliary function (later we need a smoothed version of this)
\begin{equation}
 h(t,\xi)   =\begin{cases}|\xi|,\qquad & (1+t)|\xi|\ge N,\\
\frac N{1+t}, & (1+t)|\xi|\le N. \end{cases}
 \end{equation}
It allows to rewrite the second-order system \eqref{eq:hyp-sys-2} as system of first order in 
 $V = \big( \D_t U,h(t,\D_x) U\big)^\top$,
\begin{equation}\label{eq:2.11}
   \D_t V = \begin{pmatrix} B_0(t) & \left(A(t,\D_x) + \sum_{j=1}^n B_j(t) \D_{x_j} +   C(t)\right) h(t,\D_x) ^{-1}  \\
  h(t,\D_x) \mathrm I & (\D_t h(t,\D_x)) h(t,\D_x) ^{-1}  \mathrm I \end{pmatrix} V.
\end{equation}
This system of first order is  pseudo-differential. Assumption \eqref{eq:2.9} implies its uniform strict hyperbolicity, the hyperbolic subprincipal symbol involves $A(t,\xi)$ in combination with the coefficients $B_j(t)$, while the large-time principal symbol is determined by the matrices $B_0(t)$ and $C(t)$.

\subsubsection{Scalar higher order equations} Scalar higher order equations can also be reduced to pseudo-differential first order systems. We consider
\begin{equation}
     \D_t^m u + \sum_{k=0}^{m-1} \sum_{|\alpha|\le m-k} a_{k,\alpha}(t) \D_t^k \D_x^\alpha u = 0
\end{equation}
endowed with suitable initial conditions
\begin{equation}
   \D_t^k u(0,\cdot) = u_k,\qquad k=0,1,\ldots, m-1,
\end{equation}
and for coefficient functions $a_{k,\alpha}\in\mathcal T\{m-k-|\alpha|\}$. Assuming that the principal part is uniformly strictly hyperbolic, i.e., that the polynomial
\begin{equation}\label{eq:2:ev-eq}
   \lambda^m + \sum_{k=0}^{m-1} \sum_{|\alpha|=m-k} a_{k,\alpha}(t) \lambda^k \xi^\alpha =0
\end{equation}
has distinct real roots $\lambda_j(t,\xi)$ uniform in $t\ge0$ and $\xi\in\S^{n-1}$ and that a suitable replacement of \eqref{eq:GECL-hyp-1} and of \eqref{eq:B-cond-hyp1}
holds true, we can provide an asymptotic construction of solutions. The large-time principal symbol involves only the asymptotic behaviour of the terms $a_{k,0}(t)$.

\subsection{General strategy} To cover all the above examples it is sufficient to consider first order systems with general Fourier multipliers as coefficients. Our strategy is as follows. We consider transformations of such systems to make them suitable for asymptotic integration arguments. Such transformations are done in Fourier space / extended phase space and depend heavily on the interplay between time and spatial frequency. For large frequencies we apply a standard hyperbolic theory diagonalising the full symbol of the operator and constructing a WKB representation of the fundamental solution. The core of this strategy was given by K.~Yagdjian in \cite{YagBook} and further developed in \cite{RY00,RY00b,RS05} and \cite{RW11,RW14} for the treatment of the large-time behaviour of uniformly strictly hyperbolic systems. Our approach for small frequencies extends \cite{NW15}. We rewrite the system in Fuchs type and apply asymptotic integration techniques going back to Levinson or Hartmann--Wintner. 

\subsection{Definitions: Zones, Symbol classes} We decompose the phase space $\R_+\times\R^n$ into zones
\begin{equation}
\begin{split}
   \mathcal Z_{\rm pd}(N) = \{ (t,\xi) \;:\; (1+t) |\xi|\le N \}, \\ \mathcal Z_{\rm hyp}(N)= \{ (t,\xi) \;:\; (1+t) |\xi|\ge N \}.
\end{split}
\end{equation}
As outlined above, our strategy depends heavily on the zone under consideration.
We denote by $t_\xi\ge0$ the function defined by $(1+t_\xi)|\xi|=N$, i.e., $t_\xi = N/|\xi|-1$ for $0<|\xi|\le N$ and $t_\xi=0$ for $|\xi|\ge N$. It parameterises the boundary between the zones. 

To formulate our main assumptions precisely and to carry out the diagonalisation procedures we need symbol classes. For this we follow \cite{RW11} and define
$\mathcal S\{m_1,m_2\}$ to be
\begin{equation}
\bigcup_N \left\{ a\in  C^\infty( \mathcal Z_{\rm hyp}(N) ) \;:\;   |\D_t^k \D_\xi^\alpha a(t,\xi) | \lesssim_{k,\alpha,N} |\xi|^{m_1-|\alpha|} \left( \frac{1}   {1+t}\right)^{m_2+k} \right\},
\end{equation} 
i.e., the symbolic estimates should be satisfied on some hyperbolic zone $\mathcal Z_{\rm hyp}(N)$ for $N$ chosen large enough. These symbol classes have natural embedding properties, most important for us is 
\begin{equation}
   \mathcal S \{m_1,m_2\} \subset \mathcal S\{m_1',m_2'\}
\end{equation}
whenever $m_1\le m_1'$ and $m_1+m_2 \ge m_1'+m_2'$, and behave well under multiplication and differentiation. We also define the residual class
\begin{equation}
   \mathcal H\{k\} = \bigcap_{m_1+m_2=k} \mathcal S\{m_1,m_2\}.
\end{equation}
It plays a role as remainder class when performing perfect diagonalisation within the hyperbolic zone $\mathcal Z_{\rm hyp}(N)$. Furthermore, symbols from the class $\mathcal S\{-1,2\}$ are uniformly integrable with respect to $t$ over $\mathcal Z_{\rm hyp}(N)$. For $a\in\mathcal S\{-1,2\}$ the symbol estimate implies for $|\xi|\le N$
\begin{equation}
    \int_{t_\xi}^\infty |a(t,\xi)| \d t \lesssim \frac{1}{|\xi| (1+t_\xi)} \lesssim 1
\end{equation}
and for $|\xi|\ge N$
\begin{equation}
    \int_{0}^\infty |a(t,\xi)| \d t \lesssim \frac{1}{|\xi|} \lesssim 1.
\end{equation}

\subsection{Main assumptions}\label{ssec:2.1.4} We consider the Cauchy problem for a system \eqref{eq:hyp-sys-1},
where the symbol  $A(t,\xi)$ of the Fourier multiplier $A(t,\D_x)$ satisfies the following assumptions:
\begin{description}
\item[(A1)] There exists a positively homogeneous symbol $A_{\rm hom}(t,\xi)\in\mathcal S\{1,0\}\otimes\C^{d\times d}$, i.e., we assume $A_{\rm hom}(t,\rho \xi) = \rho A_{\rm hom}(t,\xi)$ for $\rho>0$, such that
\begin{equation}
A-A_{\rm hom}\in \mathcal S\{0,1\}\otimes\C^{d\times d}
\end{equation}
and that the eigenvalues $\lambda_j(t,\xi)$, $j=1,\ldots, d$ of $A_{\rm hom}(t,\xi)$ are real and satisfy
\begin{equation}
     |\lambda_i(t,\xi) - \lambda_j(t,\xi)| \ge \delta |\xi|
\end{equation}
uniformly in $t\ge 0$ and for all $i\ne j$. The system \eqref{eq:hyp-sys-1} is uniformly strictly hyperbolic if this assumption is satisfied.
We refer to $A_{\rm hom}$ as the {\em hyperbolic principal symbol}.
\item[(A2)] Let $M\in\mathcal S\{0,0\}\otimes\C^{d\times d}$ be a diagonaliser of the hyperbolic principal symbol $A_{\rm hom}$ with $M^{-1}\in\mathcal S\{0,0\}\otimes\C^{d\times d}$ (which exists by (A1) as shown in \cite[Lem. 4.2]{RW14}) and let 
\begin{equation}
F_0 = \diag \big( M^{-1} (A-A_{\rm hom}) M + (\D_t M^{-1})M \big) \mod \mathcal S\{-1,2\} \otimes \C^{d\times d}.  
\end{equation}
We refer to $F_0\in \mathcal S\{0,1\}\otimes\C^{d\times d}$ as the {\em hyperbolic subprincipal symbol}.
Then we assume that for constants $\kappa_\pm\in\R$ and all $t\ge s \ge t_\xi$
\begin{equation}\label{eq:2.25}
  \kappa_+ \log\frac{1+t}{1+s} + C_+ \le  \Im \int_s^t  F_0(\tau,\xi) \d\tau  \le \kappa_- \log\frac{1+t}{1+s} + C_-
\end{equation} 
holds true with suitable $C_\pm\in\R$. 
\item[(A3)] 
There exists $\Lambda\in \mathcal T\{0\}\otimes\C^{d\times d}$ diagonal and  $\tilde M\in\mathcal T\{0\}\otimes\C^{d\times d}$  invertible with $\tilde M^{-1} \in \mathcal T\{0\}\otimes\C^{d\times d}$ such
that 
 \begin{equation}
   \tilde R =  t \tilde M^{-1} A \tilde M - \Lambda - (t\D_t \tilde M^{-1})\tilde M
 \end{equation}
 is small in the sense that
\begin{equation}
 \sup_{\xi\,:\, t_\xi\ge 1} \int_1^{t_\xi} \| \tilde R(t,\xi) \|^\sigma \frac{\d t}{t} <\infty
\end{equation}
holds true for some constant $\sigma\ge1$ and a zone constant $N$. We refer to to the pair $(\Lambda,\tilde M)$ as the {\em large time principal symbol}
and assume further that its diagonal entries $\Lambda=\diag(\mu_1,\ldots,\mu_d)$ satisfy 
\begin{itemize}
\item {\bf in the case $\sigma=1$} the weak dichotomy condition
\begin{equation}\label{eq:weak-dich} 
\begin{split}
& \limsup_{t\to\infty}  \Im \int_1^t \big( \mu_i(\tau) - \mu_j(\tau) \big)  \frac{\d\tau}{\tau} <\infty \\
 \qquad \text{or} \qquad &\\
 & \liminf_{t\to\infty}  \Im \int_1^t \big( \mu_i(\tau) - \mu_j(\tau) \big)  \frac{\d\tau}{\tau} > -\infty;
\end{split}
\end{equation}
\item and {\bf in the case $\sigma>1$} the strong dichotomy condition
\begin{equation}\label{eq:strong-dich} 
   |\Im (\mu_i(t) - \mu_j(t) )| \ge \delta >0,\qquad i\ne j.
\end{equation}
\end{itemize}
\item[(A4)] For sufficiently large $N$ the  the estimate
\begin{equation}
   \| \D_t^k \D_\xi^\alpha A(t,\xi) \| \lesssim_{k,\alpha,N} (1+t)^{-1-k+|\alpha|}
\end{equation}
holds true within $\mathcal Z_{\rm pd}(N)$ and for all $k$ and all multi-indices $\alpha$.
\end{description}

\begin{rem}
We remark that the constant $N$ in assumptions (A2), (A3) and (A4) depends on the estimate we have in mind, it is determined by the number of diagonalisation steps to be carried out within the hyperbolic zone and a sufficiently large $N$ ensures the invertibility of certain multipliers. If (A2) is valid in some hyperbolic zone, it is also valid in smaller hyperbolic zones with 
possibly smaller difference $\kappa_+-\kappa_-$. If (A3) is valid for some zone constant $N$ it is automatically valid for all zone constants $N$. This follows from the uniform boundedness of $\|\tilde R(t,\xi)\|$ on every strip $N \le (1+t)|\xi|\le \tilde N$ combined with
\begin{equation}
   \int_{t_\xi}^{\tilde t_\xi} \frac{\d\tau}{\tau} = \log \frac{\tilde t_\xi}{ t_\xi} = \log\frac{\tilde N}{N}. 
\end{equation}
For practical applications, see e.g.~in \cite{NW15}, assumption (A4) is satisfied for all choices of the zone constant $N$. 
\end{rem}

\begin{rem}
Assumption (A2) with $\kappa_-=\kappa_+=0$ reduces to the generalised energy conservation property (GECL) introduced in \cite{HW09} and also considered in \cite{RW14}. It turns out that in this particular case uniform lower and upper bounds for the energy of solutions in terms of the initial energy follow, provided the Fourier support of the initial data does not contain $\xi=0$. On the other hand, assumption (A1) implies that (A2) is valid for some numbers $\kappa_\pm$ and we introduce this assumption mainly to fix these important constants.
\end{rem}

\begin{rem}
If we choose in (A2) a different diagonaliser $\widetilde M \in \mathcal S\{0,0\}\otimes \C^{d\times d}$ of $A_{\rm hom}$ with $\widetilde M^{-1} \in \mathcal S\{0,0\}\otimes \C^{d\times d}$ such that 
$M^{-1}A_{\rm hom}M = \widetilde M^{-1} A_{\rm hom} \widetilde M$, then both are related by a smooth diagonal matrix $H$ with uniformly bounded entries and uniformly bounded inverse 
\begin{equation}
  \widetilde M(t,\xi)  = M(t,\xi) H(t,\xi).
\end{equation}
In consequence $F_0$ is changed to
\begin{equation}
   \widetilde F_0(t,\xi) = F_0 (t,\xi) + \D_t  \log H(t,\xi)  
\end{equation}
and estimate \eqref{eq:2.25} holds true with the same constants $\kappa_\pm$.
\end{rem}

\begin{rem}
For some applications it is convenient to use the following stronger form of assumption (A3). If there exists a diagonalisable matrix $A_\infty\in\C^{d\times d}$ such that  
\begin{equation}
   \sup_{\xi \,:\, t_\xi\ge 1}  \int_1^{t_\xi} \| t A(t,\xi) - A_\infty \|^\sigma \frac{\d t}{t} < \infty
\end{equation}
holds true with some exponent $\sigma\ge1$ and for sufficiently large $N$, then (A3) follows with the (constant) diagonaliser of $A_\infty$ as $\tilde M$ and the resulting diagonal matrix as $\Lambda$. The weak dichotomy condition is automatically satisfied. In order to guarantee the strong dichotomy condition we have to assume that the eigenvalues of $A_\infty$ are simple.
\end{rem}

\begin{rem}
It is possible to generalise Assumption (A3) to matrices $\Lambda$ and $\tilde M$ which also depend on directions $\xi/|\xi|$. This would not change any of the results in Section~\ref{sec:3.1}, except for complicating notation. As this is not needed for our applications to pseudodifferential hyperbolic systems of first order, we omit this. 
\end{rem}

\section{Construction of fundamental solutions}\label{sec3}
We concentrate mainly on the construction in $\mathcal Z_{\rm pd}(N)$. It follows \cite{NW15} and uses in particular the tools provided in the appendix to that paper. For the hyperbolic zone
$\mathcal Z_{\rm hyp}(N)$ we will remain a bit sketchy and refer to \cite{RW11} and \cite{RW14} for the detailed procedure to be carried out. 

If $U$ solves \eqref{eq:hyp-sys-1}, its spatial Fourier transform $\widehat U(t,\xi)$ satisfies the parameter dependent ordinary differential equation
\begin{equation}
    \D_t \widehat U(t,\xi) = A(t,\xi) \widehat U(t,\xi),\qquad \widehat U(0,\xi)=\widehat U_0(\xi),
\end{equation}
and can thus be represented in terms of the fundamental solution $\mathcal E(t,s,\xi)$, i.e., the solution to the matrix-valued problem
\begin{equation}
    \D_t \mathcal E(t,s,\xi) = A(t,\xi) \mathcal E(t,s,\xi),\qquad \mathcal E(s,s,\xi)=\mathrm I\in\C^{d\times d},
\end{equation}
as $\widehat U(t,\xi) = \mathcal E(t,0,\xi)\widehat U_0$. Our aim is to provide an asymptotic construction of this fundamental solution and to derive estimates from that.

\subsection{Asymptotic integration for small frequencies}\label{sec:3.1}
The large time principal symbol $\Lambda(t) = \diag(\mu_1(t),\ldots,\mu_d(t))\in\mathcal T\{0\}\otimes\C^{d\times d}$ given by assumption (A3) determines the large-time behaviour of solutions to 
\begin{equation}\label{eq:3.5}
    t\D_t \widehat U(t,\xi) = tA(t,\xi) \widehat U(t,\xi)
\end{equation}
within $\mathcal Z_{\rm pd}(N)$ as $t\to\infty$.  We assume further for the moment that $\sigma=1$. Then the transformation $\widehat U^{(0)}(t,\xi) =  \tilde M^{-1}(t) \widehat U(t,\xi)$ yields 
\begin{equation}\label{eq:3.6}
  t\D_t \widehat U^{(0)} = \Lambda(t)  \widehat U^{(0)} + \tilde R(t,\xi) \widehat U^{(0)}(t,\xi)
\end{equation}
and Levinsons theorem in the form \cite[Thm. A.1]{NW15} allows to construct a fundamental system of solutions of the form
\begin{equation}\label{eq:3:V_j-as}
  (v_j(t)  + o(1)) \exp\left( \mathrm i \int_1^t \mu_j(\tau)\frac{\d\tau}{\tau}\right) ,\qquad j=1,\ldots, d,
\end{equation}
with $M(t) = (v_1(t) | v_2(t) | \cdots |v_d(t))\in\mathcal T\{0\}\otimes\C^{d\times d}$ having the columns $v_j(t)$. As these vectors and the $\mu_j$ are uniformly bounded, 
the exponential term shows polynomial behaviour. This behaviour can be estimated in terms of the number 
\begin{equation}\label{eq:mu-def}
   \mu = \min_{j=1,\ldots,d} \liminf_{t\to\infty} \frac{\Im \int_1^t \mu_j(\tau) \frac{\d\tau}{\tau}} {\log t}. 
\end{equation}
Up to a small loss in the order this number determines the estimates also in the case $\sigma>1$.

\begin{thm}\label{thm:3.1}
Assume (A3) and let $\mu$ be defined by \eqref{eq:mu-def}.  Then uniform in $(s,\xi),(t,\xi)\in\mathcal Z_{\rm pd}(N)$ with $s\le t$ the estimate
\begin{equation}
    \| \mathcal E(t,s,\xi) \| \lesssim_{\epsilon,N} \left(\frac{1+t}{1+s}\right)^{-\mu+\epsilon}
\end{equation}
holds true with $\epsilon=0$ in the case $\sigma=1$ and for arbitrary $\epsilon>0$ in the case $\sigma>1$.
\end{thm}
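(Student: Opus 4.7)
The plan is to split on the size of $s$. For $s\in[0,1]$, assumption (A4) gives the uniform bound $\|A(t,\xi)\|\lesssim(1+t)^{-1}$ throughout $\mathcal Z_{\rm pd}(N)$, and Gronwall applied to the matrix-valued ODE $\D_t\mathcal E = A\mathcal E$ yields $\|\mathcal E(1,s,\xi)\|\lesssim 1$ uniformly in $s\in[0,1]$ and $\xi$. It therefore suffices to treat the range $1\le s\le t\le t_\xi$ and then recover the full estimate by composing $\mathcal E(t,s,\xi)=\mathcal E(t,1,\xi)\,\mathcal E(1,s,\xi)$ (for $s\in[0,1]$, the factor $1+s$ is just a bounded quantity).

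For that main range, I would apply the diagonalising transformation $\widehat U=\tilde M(t)V$ provided by (A3). Rewriting \eqref{eq:3.5} as a Fuchs-type system and substituting leads to
\begin{equation*}
 t\D_t V=\Lambda(t)V+\tilde R(t,\xi)V,
\end{equation*}
with $\tilde R$ of uniform $L^\sigma$-norm in $\xi$ with respect to the measure $\d t/t$. Because $\tilde M,\tilde M^{-1}\in\mathcal T\{0\}$ are uniformly bounded, estimates on the fundamental matrix $\mathcal E^{(0)}(t,s,\xi)$ of the transformed system translate directly via $\mathcal E(t,s,\xi)=\tilde M(t)\mathcal E^{(0)}(t,s,\xi)\tilde M^{-1}(s)$.

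Next I would invoke Theorem~A.1 of \cite{NW15}: in the case $\sigma=1$ this is a Levinson-type result applicable under the weak dichotomy condition \eqref{eq:weak-dich}, while in the case $\sigma>1$ it is a Hartmann--Wintner-type extension applicable under the strong dichotomy condition \eqref{eq:strong-dich} at the cost of a small loss $\epsilon>0$ in the exponents. Both provide a fundamental matrix of the form
\begin{equation*}
 X(t,\xi)=[\mathrm I+\rho(t,\xi)]\exp\Bigl(\j\int_1^t\Lambda(\tau)\frac{\d\tau}{\tau}\Bigr),
\end{equation*}
with perturbation $\rho(t,\xi)$ uniformly bounded in $(t,\xi)\in\mathcal Z_{\rm pd}(N)$ and tending to zero as $t\to\infty$ uniformly in $\xi$. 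Writing $\mathcal E^{(0)}(t,s,\xi)=X(t,\xi)X(s,\xi)^{-1}$ and noting that the diagonal factors combine to $\diag\bigl(\exp(\j\int_s^t\mu_j(\tau)\frac{\d\tau}{\tau})\bigr)$ while the outer $\mathrm I+\rho$ factors remain uniformly bounded, one obtains
\begin{equation*}
 \|\mathcal E^{(0)}(t,s,\xi)\|\lesssim\max_j\exp\Bigl(-\Im\int_s^t\mu_j(\tau)\frac{\d\tau}{\tau}\Bigr).
\end{equation*}
The definition \eqref{eq:mu-def} of $\mu$ then controls each exponential by a constant multiple of $((1+t)/(1+s))^{-\mu+\epsilon}$, with $\epsilon=0$ permitted when $\sigma=1$ and $\epsilon>0$ arbitrary when $\sigma>1$.

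The main obstacle, in my view, lies in the asymptotic integration step itself: one needs a fundamental matrix whose perturbation $\rho(t,\xi)$ has \emph{uniform-in-$\xi$} control over the entire segment $[1,t_\xi]$ rather than only asymptotically for each fixed $\xi$. This uniformity is precisely what the uniform $L^\sigma$-bound on $\tilde R$ in (A3) provides, but it must be tracked carefully through the Levinson or Hartmann--Wintner construction in the appendix of \cite{NW15}; the passage from pointwise asymptotics to the two-parameter estimate on $\mathcal E^{(0)}(t,s,\xi)$ is where the dichotomy conditions genuinely enter.
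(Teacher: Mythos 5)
Your outline matches the paper's: transform via $\tilde M$ to the Fuchs-type system, invoke the Levinson (resp. iterated Hartman--Wintner) theorems of \cite{NW15}, and convert the resulting asymptotics into a two-parameter bound on the fundamental matrix. However, there is a genuine gap at the step where you write $\mathcal E^{(0)}(t,s,\xi)=X(t,\xi)X(s,\xi)^{-1}$ and dismiss the ``outer $\mathrm I+\rho$ factors'' as uniformly bounded. The right-hand factor is $\bigl(\mathrm I+\rho(s,\xi)\bigr)^{-1}$, and a uniform bound on $\rho$ does not control this inverse: one needs $\det\bigl(\mathrm I+\rho(s,\xi)\bigr)$ bounded away from zero uniformly in $(s,\xi)$, and for $s$ of order one (where the Levinson $o(1)$ has not yet taken effect) this is precisely the content that has to be proved, not assumed. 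The paper closes exactly this hole: it computes the Wronskian of the fundamental system $V_j$ by Liouville's theorem, obtaining $\exp\bigl(\j\int_1^t\sum_j\mu_j(\tau)\frac{\d\tau}{\tau}\bigr)$, which in particular is $1$ at $t=1$ and never degenerates, and then bounds $\|\mathcal E(t,1,\xi)\|$ by combining Cramer's rule for the inverse of $(V_1(1,\xi)|\cdots|V_d(1,\xi))$ with Hadamard's inequality for the cofactors. Your proposal identifies the uniformity-in-$\xi$ issue in the last paragraph, but the sharper obstruction is the uniform invertibility of the initial-value matrix, and that is where the Liouville--Hadamard argument is doing real work.

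Two smaller points. First, the passage from the $s=1$ estimate to general $s$ is handled in the paper by the scaling remark from \cite{NW15} (Rem.~A.2), not by re-running the asymptotic integration from a variable base point; your formulation suggests the latter. Second, your treatment of $s\in[0,1]$ invokes (A4) and Gronwall, but the theorem assumes only (A3); the paper's route does not need (A4) for this step. Your $\sigma>1$ discussion (Hartman--Wintner plus an $\epsilon$-loss) is in line with the paper, which iterates the Hartman--Wintner transformation until Levinson form is reached and then absorbs the diagonal correction by a H\"older argument.
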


\begin{proof}
The case $\sigma=1$ just follows \cite[Thm. A.1]{NW15} and the two remarks after it. The transformed unknown $\widehat U^{(0)}=\tilde M^{-1}\widehat U$ solves \eqref{eq:3.6}
with diagonal $\Lambda(t)$ satisfying the (weak) dichotomy condition \eqref{eq:weak-dich} and remainder $\tilde R(t,\xi)$ satisfying the integrability condition 
\begin{equation}\label{eq:3.7}
	\sup_{\xi: t_\xi\ge 1} \int_1^{t_\xi} \| \tilde R(t,\xi)\|\frac{ \d t  } t<\infty.
\end{equation}
Therefore, we find asymptotic solutions to \eqref{eq:3.6} of the form
\begin{equation}
  \big(e_j +  o(1)\big) \exp\left(\mathrm i \int_1^t \mu_j(\tau)\frac{\d\tau}{\tau}\right),\qquad t\to\infty
\end{equation}
uniform in $\xi$ with $(t,\xi)\in\mathcal Z_{\rm pd}(N)$ and with $e_j$ the $j$-th basis vector of $\C^d$. Transforming back gives a fundamental system $V_j(t,\xi)$ of \eqref{eq:3.5}
satisfying \eqref{eq:3:V_j-as}. The Wronskian of this fundamental system satisfies
\begin{equation}
   \mathcal W_{V_1,\ldots V_d}(t)  = \det\big( V_1(t,\xi) |  \cdots | V_d(t,\xi)\big) = \exp\left( \mathrm i \int_1^t \sum_{j=1}^d \mu_j(\tau) \frac{\d\tau}{\tau} \right) 
\end{equation}
by Liouville theorem combined with the above asymptotics. Hence, the fundamental matrix $\mathcal E(t,1,\xi)$ given as
\begin{equation}\label{eq:3.11}
  \mathcal E(t,1,\xi) = \big( V_1(t,\xi) | \cdots | V_d(t,\xi)\big) \big( V_1(1,\xi) | \cdots | V_d(1,\xi)\big)^{-1} 
\end{equation}
can be estimated by applying Hadamard's inequality to Cramer's rule
\begin{equation}
   \| \mathcal E(t,1,\xi)\| \lesssim_{N} t^{-\mu}. 
\end{equation}
Combined with the scaling argument from  \cite[Rem. A.2]{NW15} 
\begin{equation}
   \| \mathcal E(t,s,\xi) \| \lesssim_N \left(\frac{1+t}{1+s}\right)^{-\mu}
\end{equation}
follows. For the case $\sigma>1$ equation \eqref{eq:3.7} holds true with the exponent $\sigma$. In order to improve integrability we apply the Hartman--Wintner theorem in the form \cite[Thm. A.2]{NW15}. This transforms the system \eqref{eq:3.6} into a new system 
\begin{equation}
\begin{split}
t\D_t \widehat U^{(1)}(t,\xi)& = \big(\Lambda_1 (t,\xi)  +  \tilde R_1(t,\xi) \big) \widehat U^{(1)}(t,\xi),\\
 \Lambda_1(t,\xi) &=  \Lambda(t)+\diag \tilde R(t,\xi)
\end{split}
\end{equation}
with 
\begin{equation}
	\sup_{\xi:t_\xi\ge1} \int_1^{t_\xi} \| \tilde R_1(t,\xi)\|^{\max\{\sigma/2,1\}} \frac{ \d t  } t<\infty
\end{equation}
and iteratively yields after $k$ steps (with $k$ chosen large enough to guarantee $\sigma/2^k<1$) a system of Levinson form. This can then be dealt with as in the first case. Because the new diagonal matrix $\Lambda_k(t,\xi)$ differs from $\Lambda$ by terms satisfying the $\sigma$-integrability condition, the asymptotic behaviour changes slightly. Let for this $r(t,\xi)$ be real-valued and satisfy the $\sigma$-integrability condition 
\begin{equation}
\sup_{\xi : t_\xi\ge 1} \int_1^{t_\xi} |r(t,\xi)|^\sigma \frac{\d t}t <\infty
\end{equation} 
and let $\mu$ be a real number. Then by H\"older inequality and with $\sigma'$ dual to $\sigma$
\begin{equation}
\begin{split}
\exp\left( \int_1^{t}( -\mu + r(\tau,\xi)) \frac{\d \tau}\tau  \right) &= t^{-\mu} \exp\left( \int_1^{t} r(\tau,\xi) \frac{\d\tau}{\tau} \right)\\
&\le t^{-\mu} \exp\left( \|r\|_{L^\sigma}   \left(\int_1^t \frac{\d\tau}{\tau} \right)^{1/\sigma'} \right) \\ 
& \le t^{-\mu} \exp\left( \|r\|_{L^\sigma} (\log t)^{1/\sigma'}\right) \lesssim_\epsilon t^{-\mu+\epsilon}
\end{split}
\end{equation}
for any $\epsilon>0$.
\end{proof}

\subsection{Estimating derivatives} Assumption (A4) allows to estimate derivatives of $\mathcal E(t_\xi,0,\xi)$ with respect to the frequency variable $\xi$. This will be essential to prove dispersive type estimates.

\begin{thm}
Assume (A3) and (A4). Then the estimate
\begin{equation}
   \| \D_\xi^\alpha \mathcal E(t_\xi,0,\xi)\| \lesssim_{\alpha,\epsilon,N} (1+t_\xi)^{-\mu+\epsilon} |\xi|^{-|\alpha|},\qquad |\xi|\le N
\end{equation}
holds true for all multi-indices $\alpha$ and with arbitrary $\epsilon>0$ for $\sigma>1$ and with $\epsilon=0$ for $\sigma=1$.
\end{thm}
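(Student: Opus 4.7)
The plan is to prove, by induction on $|\alpha|$, the stronger mixed estimate
\[
  \|\D_\xi^\alpha \mathcal E(t,s,\xi)\| \lesssim_{\alpha,\epsilon,N} \left(\frac{1+t}{1+s}\right)^{-\mu+\epsilon} (1+t)^{|\alpha|}
\]
valid for all $(s,\xi),(t,\xi)\in\mathcal Z_{\rm pd}(N)$ with $s\le t$. Evaluated at $(t,s)=(t_\xi,0)$ and using $(1+t_\xi)\asymp |\xi|^{-1}$ on $|\xi|\le N$, this yields the stated bound, since any intermediate time $\tau\in[0,t_\xi]$ automatically satisfies $(1+\tau)|\xi|\le N$ so the induction hypothesis applies along the entire integration path.

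The base case $|\alpha|=0$ is exactly Theorem~\ref{thm:3.1}. For the induction step, I differentiate the defining equation $\D_t\mathcal E=A\mathcal E$ in $\xi$ and use the Leibniz rule to obtain
\[
  \D_t(\D_\xi^\alpha \mathcal E) = A\,\D_\xi^\alpha \mathcal E + \sum_{\substack{\beta+\gamma=\alpha\\ |\beta|\ge 1}} \binom{\alpha}{\beta}(\D_\xi^\beta A)(\D_\xi^\gamma \mathcal E),
\]
with initial condition $\D_\xi^\alpha \mathcal E(s,s,\xi)=0$ whenever $|\alpha|\ge 1$. Duhamel's formula (variation of constants with respect to the homogeneous equation) then gives
\[
  \D_\xi^\alpha \mathcal E(t,s,\xi) = \j \int_s^t \mathcal E(t,\tau,\xi) \sum_{\substack{\beta+\gamma=\alpha\\ |\beta|\ge 1}} \binom{\alpha}{\beta}\bigl(\D_\xi^\beta A(\tau,\xi)\bigr)\bigl(\D_\xi^\gamma \mathcal E(\tau,s,\xi)\bigr) \d\tau,
\]
expressing $\D_\xi^\alpha \mathcal E$ in terms of lower-order $\xi$-derivatives, to which the induction hypothesis applies since $|\gamma|<|\alpha|$ when $|\beta|\ge 1$.

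It remains to combine the three ingredients inside the integral: Theorem~\ref{thm:3.1} controls the factor $\mathcal E(t,\tau,\xi)$ by $((1+t)/(1+\tau))^{-\mu+\epsilon}$, assumption (A4) gives $\|\D_\xi^\beta A(\tau,\xi)\|\lesssim (1+\tau)^{-1+|\beta|}$, and the induction hypothesis bounds $\|\D_\xi^\gamma \mathcal E(\tau,s,\xi)\|$ by $((1+\tau)/(1+s))^{-\mu+\epsilon}(1+\tau)^{|\gamma|}$. The two powers $(1+\tau)^{\pm(\mu-\epsilon)}$ cancel, leaving an integrand of size $((1+t)/(1+s))^{-\mu+\epsilon}(1+\tau)^{|\alpha|-1}$ uniformly in the multi-index decomposition, whose integral over $[s,t]$ produces the extra factor $(1+t)^{|\alpha|}$.

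The main obstacle is purely bookkeeping: one must keep track of the cancellation of the $(1+\tau)^{-\mu+\epsilon}$ factors and verify that the $-1$ from (A4) combined with $|\beta|+|\gamma|=|\alpha|$ yields exactly the integrable exponent $|\alpha|-1$ so that the integration reproduces the target power $(1+t)^{|\alpha|}$. There is no new analytic input beyond Theorem~\ref{thm:3.1} and assumption (A4); in particular the $\epsilon$-loss in the case $\sigma>1$ is simply inherited from Theorem~\ref{thm:3.1} and requires no separate argument.
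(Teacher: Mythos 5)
Your induction argument (Duhamel on the differentiated equation, cancellation of the $(1+\tau)^{\pm(\mu-\epsilon)}$ factors, integration producing the extra $(1+t)^{|\alpha|}$) is correct and is exactly how the paper establishes the interior estimate
\[
   \|\D_\xi^\alpha \mathcal E(t,0,\xi)\| \lesssim_{\alpha,\epsilon,N} (1+t)^{-\mu+\epsilon+|\alpha|},\qquad (t,\xi)\in\mathcal Z_{\rm pd}(N).
\]
However, there is a gap at the final step. The theorem concerns $\D_\xi^\alpha$ applied to the map $\xi\mapsto\mathcal E(t_\xi,0,\xi)$, and $t_\xi=N/|\xi|-1$ depends on $\xi$. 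What your induction controls is the \emph{partial} derivative $\D_\xi^\alpha\mathcal E(t,0,\xi)$ with $t$ held fixed; ``evaluating at $t=t_\xi$'' afterwards is not the same quantity, because the chain rule produces additional terms involving $t$-derivatives of $\mathcal E$ and $\xi$-derivatives of $t_\xi$. Concretely one needs mixed estimates
\[
   \|\D_t^k\D_\xi^\beta\mathcal E(t,0,\xi)\|\lesssim_{k,\beta,\epsilon,N}(1+t)^{-\mu+\epsilon-k+|\beta|}
\]
(obtained by differentiating the Duhamel representation once more in $t$, using $\D_t\mathcal E=A\mathcal E$ and (A4)), together with
\[
   |\D_\xi^\gamma(1+t_\xi)|\lesssim_\gamma |\xi|^{-1-|\gamma|},\qquad |\xi|\le N.
\]
Feeding these into Fa\`a di Bruno and using $(1+t_\xi)\asymp|\xi|^{-1}$, every chain-rule term contributes $(1+t_\xi)^{-\mu+\epsilon-k+|\beta|}\prod_i(1+t_\xi)^{1+|\gamma_i|}=(1+t_\xi)^{-\mu+\epsilon+|\alpha|}$, which gives the stated bound. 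Without this step your claim about ``applying the induction hypothesis along the integration path'' does not close the argument for the quantity in the theorem. The claim that ``no new analytic input'' is required is therefore slightly optimistic: you do need the $t$-derivative estimate, even though it is easy.
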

\begin{proof}
We consider the fundamental solution $\mathcal E(t,0,\xi)$ for $(t,\xi)\in\mathcal Z_{\rm pd}(N)$. Its derivatives with respect to $t$ are easily estimated by the equation, indeed
from $\D_t \mathcal E = A(t,\xi) \mathcal E$ we observe in combination with Theorem~\ref{thm:3.1} that 
\begin{equation}
 \|  \D_t \mathcal E(t,0,\xi) \| \le \|A(t,\xi) \| \, \|\mathcal E(t,0,\xi) \| \lesssim_{\epsilon,N} (1+t)^{-\mu+\epsilon-1}.
\end{equation}
Higher order $t$-derivatives are estimated recursively taking into account the estimate of assumption (A4). For $\xi$-derivatives we formally differentiate the equation satisfied by $\mathcal E$ and obtain 
\begin{equation}\label{eq:3.14}
   \D_t \D_\xi^\alpha \mathcal E(t,0,\xi) = A(t,\xi) \D_\xi^\alpha \mathcal E(t,0,\xi) + \sum_{\beta<\alpha} \binom\alpha\beta\big( \D_\xi^\beta A(t,\xi)\big) \big(\D_\xi^{\alpha-\beta} 
   \mathcal E(t,0,\xi)\big)
\end{equation}
such that by Duhamel formula (and using $\D_\xi^\alpha \mathcal E(0,0,\xi)=0$ for $|\alpha|\ge1$)
\begin{equation}\label{eq:3.15}
   \D_\xi^\alpha \mathcal E(t,0,\xi) = \int_0^t \mathcal E(t,s,\xi) R_\alpha(s,\xi) \d s,
\end{equation}
where $R_\alpha(t,\xi)$ denotes the sum in \eqref{eq:3.14}. By induction we prove 
\begin{equation}\label{eq:3.16}
   \|\D_\xi^\alpha \mathcal E(t,0,\xi) \| \lesssim_{\alpha,\epsilon,N} (1+t)^{-\mu+\epsilon+|\alpha|}. 
\end{equation}
This was already shown for $|\alpha|=0$, it suffices to give the induction step. Let $|\alpha|\ge1$. Assuming \eqref{eq:3.16} for all multi-indices $\beta<\alpha$ yields in combination with (A4) the estimate
\begin{equation}
    \| R_\alpha(t,\xi)\| \lesssim_{\alpha,\epsilon,N} (1+t)^{-\mu+\epsilon+|\alpha|-1}.
\end{equation}
Hence by \eqref{eq:3.15} in combination with Theorem~\ref{thm:3.1} we obtain
\begin{equation}
\begin{split}
   \|\D_\xi^\alpha \mathcal E(t,0,\xi)\| &\lesssim_{\alpha,\epsilon,N} \int_0^t  \left(\frac{1+t}{1+s}\right)^{-\mu+\epsilon} (1+s)^{-\mu+\epsilon+|\alpha|-1}\d s \\&
   \lesssim_{\alpha,\epsilon,N}(1+t)^{-\mu+\epsilon} \int_0^t (1+s)^{|\alpha|-1} \d s 
\end{split}
\end{equation}
and thus the \eqref{eq:3.16} holds true for $\alpha$. Finally, mixed derivatives are easily estimated from differentiating \eqref{eq:3.15}. This yields
\begin{equation}
    \| \D_t^k \D_\xi^\alpha \mathcal E(t,0,\xi)\| \lesssim_{k,\alpha,\epsilon,N} (1+t)^{-\mu+\epsilon-k+|\alpha|} ,\qquad |\xi|\le N,
\end{equation}
and combined with the estimate
\begin{equation}\label{eq:3:txi-der-est}
   |\D_\xi^\alpha (1+ t_\xi) | \lesssim_\alpha |\xi|^{-1-|\alpha|},\qquad |\xi|\le N,
\end{equation}
the desired statement follows.
\end{proof}

\subsection{The diagonalisation scheme for high frequencies} We recall some facts about transformations to be applied within the hyperbolic zone $\mathcal Z_{\rm hyp}(N)$ in order to make the system suitable for asymptotic integration and WKB analysis. They are merely standard and follow \cite{RW11}, \cite{RW14}. They are entirely based upon assumption (A1). First, we recall \cite[Lem.~4.2]{RW14}.

\begin{prop}\label{prop:3.3}
\begin{enumerate}
\item
The eigenvalues $\lambda_j(t,\xi)$ of the hyperbolic principal part $A_{\rm hom}(t,\xi)$ satisfy  
\begin{equation}
\lambda_j\in \mathcal S\{1,0\}
\end{equation}
together with 
\begin{equation}
(\lambda_i-\lambda_j)^{-1} \in \mathcal S\{-1,0\},\qquad i\ne j.
\end{equation}
\item
There exists a matrix-valued symbol $M\in\mathcal S\{0,0\}\otimes\C^{d\times d}$, homogeneous in the sense that $M(t,\rho\xi)=M(t,\xi)$ for $\rho>1$,  with uniformly bounded inverse
$M^{-1}\in\mathcal S\{0,0\}\otimes\C^{d \times d}$ such that 
\begin{equation}
    M^{-1}(t,\xi) A_{\rm hom}(t,\xi) M(t,\xi) = \diag(\lambda_1(t,\xi),\ldots \lambda_d(t,\xi) )
\end{equation}
within $\mathcal Z_{\rm hyp}(N)$.
\end{enumerate}
\end{prop}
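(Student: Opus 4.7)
The plan is to combine positive homogeneity in $\xi$ with holomorphic functional calculus to reduce to the unit sphere $\S^{n-1}$, where the symbol bounds on $A_{\rm hom}$ restrict to uniform $C^\infty$ bounds, and then to extend the resulting smooth objects back to $\mathcal Z_{\rm hyp}(N)$ by homogeneity. For part (1) I would write $\lambda_j(t,\xi) = |\xi|\,\tilde\lambda_j(t,\xi/|\xi|)$ with $\tilde\lambda_j$ an eigenvalue of $A_{\rm hom}(t,\omega)$ for $\omega = \xi/|\xi|\in\S^{n-1}$. Assumption (A1) gives $|\tilde\lambda_i - \tilde\lambda_j| \ge \delta$ uniformly on $\R_+\times\S^{n-1}$, so the spectral projectors
\[
  P_j(t,\omega) = \frac{1}{2\pi\j}\oint_{|z - \tilde\lambda_j(t,\omega)| = \delta/4} (z - A_{\rm hom}(t,\omega))^{-1}\,\d z
\]
are well-defined, with resolvent uniformly bounded on every contour. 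Differentiating under the integral and applying the Leibniz rule to $A_{\rm hom}\in\mathcal S\{1,0\}$ restricted to $|\xi|=1$ yields uniform bounds for all $\partial_t^k\partial_\omega^\alpha P_j$, with the $\partial_t^k$-derivatives of order $(1+t)^{-k}$. Since $\mathrm{rank}\,P_j = 1$, the formula $\tilde\lambda_j = \mathrm{tr}(A_{\rm hom}(t,\omega)P_j(t,\omega))$ transfers the same regularity to $\tilde\lambda_j$, and the chain rule applied to the composition $\xi \mapsto \xi/|\xi|$ together with the prefactor $|\xi|$ delivers $\lambda_j\in\mathcal S\{1,0\}$; the separation bound likewise gives $(\tilde\lambda_i-\tilde\lambda_j)^{-1}\in C^\infty(\R_+\times\S^{n-1})$ with uniform bounds, hence $(\lambda_i - \lambda_j)^{-1} = |\xi|^{-1}(\tilde\lambda_i-\tilde\lambda_j)^{-1}\in\mathcal S\{-1,0\}$.

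For part (2), I would construct the columns $v_j(t,\omega)$ of $M$ by selecting a uniformly bounded section of $\mathrm{range}\,P_j(t,\omega)$. Writing $P_j = v\,w^*$ for some $v,w\in\C^d$ with $w^*v = 1$ gives $\sum_{k=1}^d \|P_j e_k\|^2 = \|v\|^2\|w\|^2 \ge |w^*v|^2 = 1$ by Cauchy--Schwarz, so at every $(t,\omega)$ at least one of the vectors $P_j(t,\omega)e_k$ has norm at least $1/\sqrt d$. Using a smooth partition of unity in $k$ subordinate to the open sets $\Omega_k = \{(t,\omega) : \|P_j(t,\omega)e_k\| > 1/(2\sqrt d)\}$ and patching the candidates $P_j(t,\omega)e_k$ together, I obtain a globally smooth, uniformly bounded eigenvector field $v_j$ with $\mathcal S\{0,0\}$-type estimates and lying in the one-dimensional range of $P_j$. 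Extending by $v_j(t,\xi):=v_j(t,\xi/|\xi|)$ and setting $M=(v_1|\cdots|v_d)$ produces a degree-zero homogeneous symbol in $\mathcal S\{0,0\}\otimes\C^{d\times d}$. Linear independence of the columns is automatic from the simplicity of the spectrum, and a compactness argument on $\S^{n-1}$ combined with the uniform bounds on the $v_j$ yields a uniform lower bound on $|\det M|$, so Cramer's rule and the Leibniz rule give $M^{-1}\in\mathcal S\{0,0\}\otimes\C^{d\times d}$ within $\mathcal Z_{\rm hyp}(N)$.

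The main obstacle is the possible topological obstruction to a globally smooth unit-norm eigenvector field on $\S^{n-1}$: the eigenline bundle $\mathrm{range}\,P_j$ need not be trivial. The partition-of-unity construction circumvents this by dropping normalisation and producing only uniformly bounded representatives, which is sufficient for membership in $\mathcal S\{0,0\}$. Once this selection is in place, all remaining symbol and invertibility estimates reduce to routine Leibniz book-keeping combined with the homogeneity extension $\xi \mapsto \xi/|\xi|$.
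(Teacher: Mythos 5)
The paper does not actually prove Proposition~\ref{prop:3.3}; it is recalled verbatim from \cite[Lem.~4.2]{RW14}. So your argument is not being compared against an in-paper proof, only against correctness. Part~(1) is essentially sound: homogeneity reduces everything to $\S^{n-1}$, the spectral gap $|\tilde\lambda_i-\tilde\lambda_j|\ge\delta$ makes the Riesz projector contour integral uniformly valid, the trace formula $\tilde\lambda_j=\mathrm{tr}(A_{\rm hom}P_j)$ transfers regularity, and iterated differentiation of the resolvent produces the $(1+t)^{-k}$ decay for $t$-derivatives. The deductions $\lambda_j\in\mathcal S\{1,0\}$ and $(\lambda_i-\lambda_j)^{-1}\in\mathcal S\{-1,0\}$ follow.

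Part~(2) contains a genuine gap, and it is precisely the one you flag at the end but then dismiss too quickly. Your patched vector $v_j=\sum_k\chi_k\,P_j e_k$ is a smooth, bounded section of the line bundle $\mathrm{range}\,P_j$, but there is no reason for it to be \emph{nonvanishing}. On an overlap the candidates are complex-scalar multiples of one another, $P_j e_\ell=c(t,\omega)\,P_j e_k$ with $c$ nowhere zero, and $\chi_k+c\chi_\ell$ can pass through zero; dropping the unit-norm normalisation does not help, because a line bundle over $\S^{n-1}$ admits a nowhere-vanishing continuous section if and only if it is trivial, normalised or not, and only a nowhere-vanishing $v_j$ can serve as a column of an invertible $M$. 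Your subsequent claim that ``linear independence of the columns is automatic from the simplicity of the spectrum'' is therefore unjustified: simplicity gives distinct one-dimensional ranges $\mathrm{range}\,P_j$, but if some $v_j$ vanishes at a point, the columns are linearly dependent there and $\det M=0$. A concrete obstruction occurs already for $d=2$, $n=3$, $A_{\rm hom}(\xi)=\xi_1\sigma_1+\xi_2\sigma_2+\xi_3\sigma_3$: the eigenvalues $\pm|\xi|$ are real, simple and uniformly separated, yet the eigenline bundle of $\lambda_+=|\xi|$ over $\S^2$ is the tautological bundle with Chern number $\pm1$, so no global smooth (or even continuous) diagonaliser of degree zero exists. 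To make the construction rigorous one must either add a triviality hypothesis on the eigenbundles (automatic for $n\ne 3$ since $H^2(\S^{n-1};\mathbb Z)=0$, and true in all the worked examples of Section~\ref{sec5}, where $M$ is explicit), or abandon a single global $M$ and instead work microlocally with a finite conic cover of $\S^{n-1}$ and finitely many local diagonalisers plus a corresponding decomposition of the data. As written, the proposal does not close this step.
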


This proposition allows for a first transformation of the system. If the vector-valued function $U$ solves \eqref{eq:hyp-sys-1}, the function $V^{(0)}(t,\xi) = M^{-1}(t,\xi)\widehat U(t,\xi)$ solves
\begin{equation}
   \D_t V^{(0)}(t,\xi) =\big( \mathcal D(t,\xi) + R_0(t,\xi) \big) V^{(0)}(t,\xi)
\end{equation}
with diagonal main part
\begin{equation}
   \mathcal D = \diag\big( \lambda_1,\ldots,\lambda_d\big) \in \mathcal S\{1,0\}\otimes\C^{d\times d}
\end{equation}
and remainder term
\begin{equation}
   R_0 = M^{-1} (A-A_{\rm hom}) M + (\D_t M^{-1})M \in \mathcal S\{0,1\}\otimes\C^{d\times d}.
\end{equation}
Note that assumption (A2) is related to the diagonal part of that symbol, $F_0 = \diag R_0$. Choosing the hyperbolic zone small enough allows to perform further transformations to improve the remainder. The first step is as follows, cf. Section~4.2 from \cite{RW14}.

\begin{prop}\label{prop:3.4}
There exists a symbol $N_1\in\mathcal S\{0,0\}\otimes\C^{d\times d}$ satisfying $N_1^{-1} \in \mathcal S\{0,0\}\otimes\C^{d\times d}$ for a sufficiently large zone constant $N$ 
such that the operator-identity
\begin{equation}
    \big( \D_t - \mathcal D - R_0\big) N_1 = N_1  \big( \D_t - \mathcal D - F_0 - R_1\big) 
\end{equation}
holds true with $F_0= \diag R_0\in \mathcal S\{1,0\}\otimes\C^{d\times d}$ and $R_1\in\mathcal S\{-1,2\}\otimes\C^{d\times d}$. 
Furthermore,  $N_1-\mathrm I \in \mathcal S\{-1,1\}\otimes\C^{d\times d}$.
\end{prop}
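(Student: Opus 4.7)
The plan is to seek $N_1$ in the form $N_1 = \mathrm I + N_1^{(1)}$ with $N_1^{(1)}\in\mathcal S\{-1,1\}\otimes\C^{d\times d}$ chosen to be purely off-diagonal, so that the desired operator identity reduces, at leading symbolic order, to a commutator equation that can be solved explicitly using Proposition~\ref{prop:3.3}.

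First I would expand the operator identity by the Leibniz rule for $\D_t$ and cancel the $\D_t$ that appears on both sides. What remains is the symbolic identity
\begin{equation*}
[\mathcal D, N_1^{(1)}] + (R_0 - F_0) = \D_t N_1^{(1)} - R_0 N_1^{(1)} + N_1^{(1)} F_0 + (\mathrm I + N_1^{(1)}) R_1.
\end{equation*}
Since $F_0 = \diag R_0$, the matrix $R_0 - F_0$ is purely off-diagonal, and $[\mathcal D, N_1^{(1)}]$ has $(i,j)$-entry $(\lambda_i-\lambda_j)(N_1^{(1)})_{ij}$. I would therefore set
\begin{equation*}
(N_1^{(1)})_{ij} = -\frac{(R_0)_{ij}}{\lambda_i-\lambda_j}\quad (i\ne j), \qquad (N_1^{(1)})_{ii} = 0,
\end{equation*}
which makes the entire left-hand side vanish. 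By Proposition~\ref{prop:3.3}(i) the inverses $(\lambda_i-\lambda_j)^{-1}$ lie in $\mathcal S\{-1,0\}$, so together with $R_0\in\mathcal S\{0,1\}$ this places $N_1^{(1)}$ in $\mathcal S\{-1,1\}\otimes\C^{d\times d}$, as required.

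With $N_1^{(1)}$ so fixed, $R_1$ is defined by solving the residual equation,
\begin{equation*}
R_1 = N_1^{-1}\bigl(-\D_t N_1^{(1)} + R_0 N_1^{(1)} - N_1^{(1)} F_0\bigr).
\end{equation*}
A routine bookkeeping of orders, using that $\D_t$ raises the second index by one and that $\mathcal S\{m_1,m_2\}\cdot\mathcal S\{m_1',m_2'\}\subset\mathcal S\{m_1+m_1',m_2+m_2'\}$, places each of the three summands in $\mathcal S\{-1,2\}$, whence $R_1\in\mathcal S\{-1,2\}\otimes\C^{d\times d}$.

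The one step that genuinely uses the hypothesis that $N$ be sufficiently large is the invertibility of $N_1 = \mathrm I + N_1^{(1)}$ on $\mathcal Z_{\rm hyp}(N)$. Because the symbol estimate for $N_1^{(1)}\in\mathcal S\{-1,1\}$ gives $\|N_1^{(1)}(t,\xi)\|\lesssim |\xi|^{-1}(1+t)^{-1} \lesssim 1/N$ inside the zone, enlarging $N$ forces $\|N_1^{(1)}\|<\tfrac12$ uniformly, and a Neumann series produces $N_1^{-1} = \sum_{k\ge 0}(-N_1^{(1)})^k\in\mathcal S\{0,0\}\otimes\C^{d\times d}$ (one checks that symbolic differentiation preserves the order termwise, which is standard). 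This invertibility step is the main, though modest, obstacle of the argument; every other conclusion follows algebraically from Proposition~\ref{prop:3.3} together with the multiplicative and differential structure of the symbol classes.
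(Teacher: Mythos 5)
Your proof is correct and takes precisely the standard route the paper alludes to (it defers to [RW14, Section 4.2] rather than writing it out): the ansatz $N_1=\mathrm I+N_1^{(1)}$ with purely off-diagonal $N_1^{(1)}$, the commutator equation solved by $(N_1^{(1)})_{ij}=-(R_0)_{ij}/(\lambda_i-\lambda_j)$, symbol-class bookkeeping via Proposition~\ref{prop:3.3}, and the Neumann-series inversion that consumes the freedom to enlarge the zone constant $N$. All the algebra and order-counting check out.
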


This can be iteratively improved. The essence is given in the following proposition also taken from \cite{RW14}. The proof is standard and we omit it here.

\begin{prop}
For any number $k > 1$ there exists a matrix-valued symbol $N_k\in\mathcal S\{0,0\}\otimes\C^{d\times d}$ satisfying $N_k^{-1} \in \mathcal S\{0,0\}\otimes\C^{d\times d}$ for a sufficiently large zone constant $N$ 
such that the operator-identity
\begin{equation}
    \big( \D_t - \mathcal D - R_0\big) N_k = N_k  \big( \D_t - \mathcal D - F_{k-1} - R_k\big) 
\end{equation}
holds true with diagonal $F_{k-1}\in \mathcal S\{0,1\}\times\C^{d\times d}$ satisfying $F_{k-1}-F_{k-2}\in\mathcal S\{1-k,k\}\otimes \C^{d\times d}$ and $R_k\in\mathcal S\{-k,k+1\}\otimes\C^{d\times d}$. Furthermore $N_k- N_{k-1} \in \mathcal S\{-k,k\}\otimes \C^{d\times d}$.
\end{prop}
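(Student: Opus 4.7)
The plan is to argue by induction on $k\ge 1$, with the base case $k=1$ supplied by Proposition~\ref{prop:3.4}. For the inductive step, assuming the identity holds at stage $k-1$ with $R_{k-1}\in\mathcal S\{1-k,k\}\otimes\C^{d\times d}$, I would make the ansatz $N_k=N_{k-1}(\mathrm I+B_k)$ with $B_k\in\mathcal S\{-k,k\}\otimes\C^{d\times d}$ to be determined. Expanding the conjugated operator identity reduces at leading order to the commutator equation
\begin{equation*}
 [\mathcal D,B_k]+R_{k-1}\equiv F_{k-1}-F_{k-2}\pmod{\mathcal S\{-k,k+1\}\otimes\C^{d\times d}}.
\end{equation*}

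The splitting into diagonal and off-diagonal parts is the key point. For the diagonal I set $F_{k-1}-F_{k-2}:=\diag R_{k-1}\in\mathcal S\{1-k,k\}\otimes\C^{d\times d}$, which preserves $F_{k-1}\in\mathcal S\{0,1\}\otimes\C^{d\times d}$ by the embedding rules. For the off-diagonal entries I use that $[\mathcal D,B_k]_{i,j}=(\lambda_i-\lambda_j)B_k^{(i,j)}$ and that $(\lambda_i-\lambda_j)^{-1}\in\mathcal S\{-1,0\}$ by Proposition~\ref{prop:3.3}(i), so that the explicit choice $B_k^{(i,j)}=-R_{k-1}^{(i,j)}/(\lambda_i-\lambda_j)$ for $i\ne j$ (with vanishing diagonal) lies in $\mathcal S\{1-k,k\}\cdot\mathcal S\{-1,0\}=\mathcal S\{-k,k\}$. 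This also yields $N_k-N_{k-1}=N_{k-1}B_k\in\mathcal S\{-k,k\}\otimes\C^{d\times d}$ immediately.

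The remaining tasks are to invert $\mathrm I+B_k$ and to collect $R_k$. For $N$ chosen sufficiently large the bound $\|B_k\|\lesssim_N N^{-k}$ on $\mathcal Z_{\rm hyp}(N)$ allows a Neumann series, producing $(\mathrm I+B_k)^{-1}\in\mathcal S\{0,0\}\otimes\C^{d\times d}$. Then
\begin{equation*}
R_k = (\mathrm I+B_k)^{-1}\bigl([F_{k-2},B_k]-B_k\,\diag R_{k-1}+R_{k-1}B_k-\D_t B_k\bigr),
\end{equation*}
and each summand lands in $\mathcal S\{-k,k+1\}\otimes\C^{d\times d}$: the commutator with $F_{k-2}\in\mathcal S\{0,1\}$ and the time derivative $\D_t B_k$ are directly in that class, while the quadratic combinations $B_k\,\diag R_{k-1}$ and $R_{k-1}B_k$ sit a priori in $\mathcal S\{1-2k,2k\}$, which is embedded into $\mathcal S\{-k,k+1\}$ for $k\ge 1$.

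The main obstacle is purely technical: a careful bookkeeping through the class embeddings $\mathcal S\{m_1,m_2\}\subset\mathcal S\{m_1',m_2'\}$ and confirming that each iteration tolerates an enlargement of the zone constant $N$ without losing invertibility of the intermediate transformations. The underlying algebra is standard from hyperbolic diagonalisation, so once the symbol-class accounting is tracked the iteration closes.
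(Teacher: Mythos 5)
Your proof is correct and follows the standard iterative diagonalisation scheme that the paper itself explicitly omits, citing it as ``standard'' and referring to~\cite{RW14}; the base case is Proposition~\ref{prop:3.4} and your inductive step is exactly the natural way to carry it forward. The key choices --- setting $F_{k-1}-F_{k-2}=\diag R_{k-1}$, solving $(\lambda_i-\lambda_j)B_k^{(i,j)}=-R_{k-1}^{(i,j)}$ for the off-diagonal entries using $(\lambda_i-\lambda_j)^{-1}\in\mathcal S\{-1,0\}$, inverting $\mathrm I+B_k$ via Neumann series on $\mathcal Z_{\rm hyp}(N)$ for $N$ large, and verifying the symbol-class accounting $\mathcal S\{1-2k,2k\}\subset\mathcal S\{-k,k+1\}$ --- all check out, including the sign bookkeeping in your displayed formula for $R_k$.
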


This transformation is sufficient to provide first uniform bounds on the fundamental solution $\mathcal E(t,s,\xi)$ within the hyperbolic zone $\mathcal Z_{\rm hyp}(N)$.
We briefly collect the key estimates for the symbols obtained after transformation. We fix the number $k$ of diagonalisation steps and in consequence also the zone constant $N$ and define  $t_\xi= \max\{ N/|\xi| -1, 0\}$ for this number $N$. Because $F_{k-1}-F_0$ is uniformly integrable over $\mathcal Z_{\rm hyp}(N)$ we know that
\begin{equation}\label{3.18}
 \left\| \exp\left(  \j \int_s^t F_{k-1}(\tau,\xi) \d\tau \right) \right\| \lesssim_{N,k} \left(\frac{1+ t}{1+s}\right)^{-\kappa_+},\qquad t\ge s,
\end{equation}
and
\begin{equation}\label{3.18'}
 \left\| \exp\left( - \j \int_s^t F_{k-1}(\tau,\xi) \d\tau \right) \right\| \lesssim_{N,k} \left(\frac{1+ t}{1+s}\right)^{\kappa_-},\qquad t\ge s,
\end{equation}
hold true with the numbers $\kappa_\pm$ from (A3). If $k> \kappa_- - \kappa_+$ the estimate
\begin{equation}\label{3.19}
   \int_{t_\xi}^\infty \|R_k(\tau,\xi)\| \tau^{\kappa_--\kappa_+} \d\tau \lesssim_{N,k} \begin{cases} |\xi|^{\kappa_+-\kappa_-},\qquad & |\xi|\ge N,\\ 1 
   , & |\xi|\le N, \end{cases}
\end{equation}
holds also true. This allows to compensate the polynomial behaviour of the fundamental solution of the diagonal part of the transformed system and implies the following first theorem.

\begin{thm}\label{thm:3.5}
Assume (A1) and (A2). Then for any $k>\kappa_--\kappa_+ + 1$ we find a zone constant $N$ such that within $\mathcal Z_{\rm hyp}(N)$ the fundamental solution 
$\mathcal E(t,t_\xi,\xi)$ is representable as
\begin{equation}\label{eq:Eprodform}
   \mathcal E(t,t_\xi,\xi)  =  M(t,\xi) N_k(t,\xi) \widetilde{\mathcal E}_k(t,t_\xi,\xi) \mathcal Q_k(t,t_\xi,\xi) N_k^{-1}(t_\xi,\xi) M^{-1}(t_\xi,\xi)
\end{equation}
in terms of the matrices $M$ from Proposition~\ref{prop:3.3}, the matrices $N_k$ from Proposition~\ref{prop:3.4}, the diagonal matrices
\begin{equation}
    \widetilde{\mathcal E}_k(t,t_\xi,\xi) =  \exp\left(\j \int_{t_\xi}^t \big(\mathcal D(\tau,\xi)+ F_{k-1}(\tau,\xi)\big) \d\tau\right)
\end{equation}
satisfying the uniform bound
\begin{equation}
   \|   \widetilde{\mathcal E}_k(t,t_\xi,\xi) \| \lesssim_{N,k} \left(\frac{1+t}{1+t_\xi}\right)^{-\kappa_+},\qquad t\ge t_\xi,
\end{equation}
and matrices $\mathcal Q_k(t,s,\xi)$ uniformly bounded and uniformly invertible within $\mathcal Z_{\rm hyp}(N)$ satisfying the symbolic estimates
\begin{equation}
    \|\D_\xi^\alpha \mathcal Q_k(t,t_\xi,\xi)\| \lesssim_{N,k} |\xi|^\alpha,\qquad t\ge t_\xi
\end{equation}
for all multiindices $|\alpha|\le k-1+\kappa_+-\kappa_-$.
\end{thm}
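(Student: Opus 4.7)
The plan is to iterate the diagonalisation identities of Propositions~\ref{prop:3.3} and~\ref{prop:3.4} $k$ times to bring the system into a diagonal-plus-integrable-remainder form, and then to construct the correction $\mathcal Q_k$ as the solution of a Volterra integral equation whose convergence is exactly what is encoded in \eqref{3.18}--\eqref{3.19}.

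For a zone constant $N$ large enough that Propositions~\ref{prop:3.3} and~\ref{prop:3.4} apply and the matrices $M$, $N_k$ together with their inverses lie in $\mathcal S\{0,0\}\otimes\C^{d\times d}$, composing the two intertwining identities gives
\[
  \mathcal E(t,t_\xi,\xi) = M(t,\xi)N_k(t,\xi)\,\Phi_k(t,t_\xi,\xi)\,N_k^{-1}(t_\xi,\xi) M^{-1}(t_\xi,\xi),
\]
where $\Phi_k$ is the fundamental solution of $\D_t \Phi_k = (\mathcal D+F_{k-1}+R_k)\Phi_k$ with $\Phi_k(t_\xi,t_\xi,\xi)=\mathrm I$. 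Since $\mathcal D$ and $F_{k-1}$ are both diagonal they commute, and the fundamental solution of their combined diagonal part is precisely the exponential $\widetilde{\mathcal E}_k$ of the theorem, whose stated bound is exactly~\eqref{3.18}. Factoring $\Phi_k=\widetilde{\mathcal E}_k\mathcal Q_k$ I would obtain the Volterra equation
\[
   \mathcal Q_k(t,t_\xi,\xi) = \mathrm I + \j\int_{t_\xi}^t \widetilde{\mathcal E}_k^{-1}(\tau,t_\xi,\xi) R_k(\tau,\xi)\widetilde{\mathcal E}_k(\tau,t_\xi,\xi) \mathcal Q_k(\tau,t_\xi,\xi)\d\tau.
\]
Using \eqref{3.18} and \eqref{3.18'} to bound the conjugation by $((1+\tau)/(1+t_\xi))^{\kappa_--\kappa_+}$, and \eqref{3.19} to dominate the resulting $\tau$-integral, Gronwall's inequality gives $\|\mathcal Q_k\|\lesssim 1$ whenever $k>\kappa_--\kappa_+$, and uniform invertibility follows from the analogous Volterra equation satisfied by $\mathcal Q_k^{-1}$.

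The main technical step will be the symbolic estimate on $\D_\xi^\alpha \mathcal Q_k$, which I would prove by induction on $|\alpha|$, differentiating the Volterra equation via Leibniz' rule. Every $\xi$-derivative falling on $\widetilde{\mathcal E}_k^{\pm1}$ contributes a factor of the form $\D_\xi\int_{t_\xi}^\tau(\mathcal D+F_{k-1})\d\sigma$; using $\mathcal D+F_{k-1}\in\mathcal S\{1,0\}\otimes\C^{d\times d}$ together with $|\D_\xi t_\xi|\lesssim|\xi|^{-2}$ and the hyperbolic-zone embedding $(1+\tau)^{-1}\lesssim|\xi|$, this extra factor costs essentially one power of $(1+\tau)$ in the kernel; each derivative falling on $R_k$, with $\D_\xi^\beta R_k\in\mathcal S\{-k-|\beta|,k+1\}\otimes\C^{d\times d}$, has a similar effect. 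The $\tau$-integral then converges as long as $k$ compensates the total accumulated growth, which gives exactly the constraint $|\alpha|\le k-1+\kappa_+-\kappa_-$; the hypothesis $k>\kappa_--\kappa_++1$ ensures that at least the first $\xi$-derivative can be handled this way.

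The hard part is this inductive accounting, verifying that at each step the additional $(1+\tau)$-growth produced by differentiating the long exponential $\widetilde{\mathcal E}_k$ is balanced precisely by the improved decay of $\D_\xi^\beta R_k$ coming from \eqref{3.19}, so that the iterated integrals remain uniformly bounded in $\xi$. Once this is established, the representation formula and all of the stated estimates follow at once.
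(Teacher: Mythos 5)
Your proposal follows essentially the same route as the paper: diagonalise via Propositions~\ref{prop:3.3} and~\ref{prop:3.4}, factor the remaining fundamental solution as $\widetilde{\mathcal E}_k\mathcal Q_k$, and control $\mathcal Q_k$ through the Volterra equation whose iterated solution is exactly the Peano--Baker series that the paper writes out (your Gronwall bound is precisely the exponential bound obtained by summing that series), then differentiate the representation and balance the $(1+\tau)$-growth from derivatives of the long exponential against the decay from $R_k$ using the hyperbolic-zone inequality $(1+\tau)^{-1}\lesssim|\xi|$. The only genuine divergence is the invertibility of $\mathcal Q_k$: the paper uses Liouville's formula to bound $\det\mathcal Q_k$ away from zero via the trace of $\widetilde{\mathcal R}_k$, whereas you propose to run the analogous Volterra/Gronwall argument for $\mathcal Q_k^{-1}$; both work, and the determinant argument has the small advantage of also giving uniform invertibility of the limit $\mathcal Q_k(\infty,t_\xi,\xi)$ in one step.
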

Combining the uniform bounds of the diagonalisers in \eqref{eq:Eprodform} and using the estimate for $\mathcal Q_k$ with $|\alpha|=0$ we obtain in particular a uniform bound for the fundamental solution.
\begin{cor}
The estimate
\begin{equation}
   \|\mathcal E(t,t_\xi,\xi)\| \lesssim_N \left(\frac{1+t}{1+t_\xi}\right)^{-\kappa_+}
\end{equation}
holds true.
\end{cor}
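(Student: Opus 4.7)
The proof is essentially a direct reading of the product representation \eqref{eq:Eprodform} from Theorem~\ref{thm:3.5}. The plan is to take operator norms on both sides of that identity, apply submultiplicativity, and then bound each of the six factors separately.

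First I would choose a number $k$ of diagonalisation steps with $k > \kappa_- - \kappa_+ + 1$ and a corresponding zone constant $N$ so that Theorem~\ref{thm:3.5} is applicable. By Proposition~\ref{prop:3.3} the diagonaliser $M$ of the hyperbolic principal part lies in $\mathcal S\{0,0\}\otimes\C^{d\times d}$ together with $M^{-1}$, so both $\|M(t,\xi)\|$ and $\|M^{-1}(t_\xi,\xi)\|$ are bounded uniformly in $(t,\xi)\in\mathcal Z_{\rm hyp}(N)$. Proposition~\ref{prop:3.4} (and its iterated version) yields the analogous uniform bounds for $N_k$ and $N_k^{-1}$. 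The factor $\mathcal Q_k(t,t_\xi,\xi)$ is uniformly bounded by the $|\alpha|=0$ case of the symbolic estimate in Theorem~\ref{thm:3.5}.

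The only factor carrying the $t$-dependence is the diagonal matrix $\widetilde{\mathcal E}_k(t,t_\xi,\xi)$, for which Theorem~\ref{thm:3.5} already records the bound
\begin{equation*}
   \|\widetilde{\mathcal E}_k(t,t_\xi,\xi)\| \lesssim_{N,k} \left(\frac{1+t}{1+t_\xi}\right)^{-\kappa_+},\qquad t\ge t_\xi.
\end{equation*}
Combining these five uniform bounds with this decaying bound through submultiplicativity of the operator norm gives the claimed estimate.

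I do not expect a real obstacle here: everything needed is already packaged into Theorem~\ref{thm:3.5} and its input propositions. The only mild subtlety worth mentioning is that the implicit constant depends on the choice of $k$ (hence of the hyperbolic zone constant $N$), but since $k$ has been fixed once and for all in terms of $\kappa_+ - \kappa_-$, this dependence is harmless and gets absorbed into the $\lesssim_N$ notation in the corollary.
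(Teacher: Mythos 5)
Your proof is correct and matches the paper's own argument: the sentence preceding the corollary states exactly that one combines the uniform bounds on the diagonalisers $M$, $M^{-1}$, $N_k$, $N_k^{-1}$ and on $\mathcal Q_k$ (the $|\alpha|=0$ case) in the product representation \eqref{eq:Eprodform}, with the decaying factor $\widetilde{\mathcal E}_k$ supplying the $\bigl((1+t)/(1+t_\xi)\bigr)^{-\kappa_+}$. Your remark about the fixed choice of $k$ and the resulting $N$-dependence of the constant is consistent with the $\lesssim_N$ notation.
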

\begin{proof}[Proof of Theorem~\ref{thm:3.5}]
We construct the fundamental solution in several steps. First, we solve the diagonal part $\D_t-\mathcal D-F_{k-1}$ of the transformed system. This can be done explicitly and yields the fundamental matrix
\begin{equation}
    \widetilde {\mathcal E}_k(t,s,\xi) = \exp\left(\j \int_s^t \big(\mathcal D(\tau,\xi)+ F_{k-1}(\tau,\xi)\big) \d\tau\right),
\end{equation}
which can be estimated together with its inverse by \eqref{3.18} and \eqref{3.18'}. 
To construct the fundamental solution $\widetilde{\mathcal E}(t,s,\xi)$ to the transformed system $\D_t -\mathcal D-F_{k-1}-R_k$
we use the ansatz $\widetilde{\mathcal E}(t,s,\xi) =    \widetilde {\mathcal E}_k(t,s,\xi)  \mathcal Q_k(t,s,\xi)$, which yields 
\begin{equation}
   \D_t \mathcal Q_k(t,s,\xi) = \big(  \widetilde {\mathcal E}_k(s,t,\xi)  R_k(t,\xi)  \widetilde {\mathcal E}_k(t,s,\xi) \big) \mathcal Q_k(t,s,\xi) = \widetilde{\mathcal R}_k(t,s,\xi) \mathcal Q_k(t,s,\xi)
\end{equation}
together with the initial condition $\mathcal Q_k(s,s,\xi)=\mathrm I$. The solution to this system can be represented in terms of the Peano--Baker series
\begin{align}\label{eq:PeanoBaker}
   \mathcal Q_k(t,s,\xi) &= \mathrm I + \sum_{\ell=1}^\infty \j^\ell \int_s^t \widetilde{\mathcal R}_k(t_1,s,\xi) \int_s^{t_1} \widetilde{\mathcal R}_k(t_2,s,\xi) \notag\\&\qquad\qquad \qquad\qquad \cdots 
   \int_s^{t_{\ell-1}} \widetilde{\mathcal R}_k(t_\ell,s,\xi) \d t_\ell \cdots \d t_2 \d t_1.
 \end{align}
Chosing $k$ large enough such that $k>\kappa_--\kappa_+$ implies uniform integrability of $\widetilde{\mathcal R}_k(t,t_\xi,\xi)$ over the hyperbolic zone and thus  the uniform bound
\begin{align}
   \| \mathcal Q_k(t,t_\xi,\xi) \|& \le \exp\left(\int_{t_\xi}^t \| \widetilde{\mathcal R}_k(\tau,t_\xi,\xi)\| \d\tau\right)\notag\\
   & \le \exp\left( \frac C{(1+t_\xi)^{\kappa_--\kappa_+}} \int_{t_\xi}^\infty
   \frac{\d\tau}{|\xi|^k (1+\tau)^{k+1-\kappa_-+\kappa_+}}  \right)
   \le \exp\left(\frac C{N^k}\right).
\end{align}
follows. It further implies the convergence $\mathcal Q_k(t,t_\xi,\xi) \to \mathcal Q_k(\infty,t_\xi,\xi)$ locally uniform with respect to $\xi\ne0$ as well as the invertibility of 
$\mathcal Q_k(\infty,t_\xi,\xi)$ based on the estimate for its determinant
\begin{multline}
    \det \mathcal Q_k(\infty,t_\xi,\xi) = \exp\left(\j\int_{t_\xi}^\infty \mathrm{trace}\, \widetilde{\mathcal R}_k(\tau,t_\xi,\xi) \d\tau\right)
   \\ \ge \exp\left(- d\int_{t_\xi}^\infty \|\widetilde{\mathcal R}_k(\tau,t_\xi,\xi)\| \d\tau \right).
\end{multline}
The fundamental solution to the original system is obtained from the diagonalisation procedure. Indeed, tracing back the transformations yields \eqref{eq:Eprodform}.
It remains to prove the symbolic estimate for the matrix $\mathcal Q_k(t,t_\xi,\xi)$. This follows by differentiating the series representation \eqref{eq:PeanoBaker} term by term. To establish bounds for derivatives of $\mathcal R_k(t,s,\xi)$, we first
observe that for $|\alpha|=1$
\begin{equation}
   \D_\xi^\alpha \widetilde{\mathcal E}_k (t,s,\xi) = \widetilde{\mathcal E}_k (t,s,\xi) \, \int_s^t \partial_\xi^\alpha\big(\mathcal D(\tau,\xi)+F_{k-1}(\tau,\xi)\big) \d\tau,
\end{equation}
where the integral on the right is bounded by $(t-s)$. Forming further derivatives and using Leibniz rule implies 
\begin{equation}
   \| \D_\xi^\alpha  \widetilde{\mathcal E}_k (t,s,\xi)\| \lesssim_{N,k,\alpha} \left(\frac{1+t}{1+s}\right)^{-\kappa_+} (t-s)^{|\alpha|},\qquad t\ge s,
\end{equation}
for all multi-indices $\alpha$. Derivatives with respect to $s$ yield
\begin{equation}
   \D_s \widetilde{\mathcal E}_k (t,s,\xi) = -\widetilde{\mathcal E}_k (t,s,\xi)  \big(\mathcal D(s,\xi)+F_{k-1}(s,\xi)\big) 
\end{equation}
and thus are estimated by multiplications by $|\xi|$. Again using Leibniz rule and combining it with the derivatives with respect to $\xi$ yields
\begin{equation}\label{eq:3.41}
   \| \D_s^\ell\D_\xi^\alpha  \widetilde{\mathcal E}_k (t,s,\xi)\| \lesssim_{N,k,\alpha,\ell} \left(\frac{1+t}{1+s}\right)^{-\kappa_+} (t-s)^{|\alpha|} |\xi|^\ell,\qquad t\ge s.
\end{equation}
Similarly we obtain 
\begin{equation}\label{eq:3.42}
   \| \D_s^\ell\D_\xi^\alpha  \widetilde{\mathcal E}_k (s,t,\xi)\| \lesssim_{N,k,\alpha,\ell} \left(\frac{1+t}{1+s}\right)^{\kappa_-} (t-s)^{|\alpha|} |\xi|^\ell,\qquad t\ge s.
\end{equation}
and hence estimates for derivatives of $\widetilde{\mathcal R}_k (t,t_\xi,\xi)$ follow from the symbolic behaviour of $R_k(t,\xi)$ combined with the estimates
\eqref{eq:3.41} and \eqref{eq:3.42} and read as
\begin{equation}
   \| \D_\xi^\alpha \widetilde{\mathcal R}_k(t,t_\xi,\xi) \| \lesssim_{N,k,\alpha} \left(\frac{1+t}{1+t_\xi}\right)^{\kappa_--\kappa_+} \frac{1}{(1+t)^{k+1} |\xi|^k} \left( (1+t)^{|\alpha|} + |\xi|^{-|\alpha|} \right).
\end{equation}
Hence, differentiating \eqref{eq:PeanoBaker} term by term with respect to $\xi$ yields in combination with \eqref{eq:3:txi-der-est}
\begin{equation}
  \| \D_\xi^\alpha \mathcal Q_k(t,t_\xi,\xi) \|  \lesssim_{N,k,\alpha} |\xi|^{-|\alpha|},\qquad t\ge t_\xi,
\end{equation}
as long as $|\alpha|\le k-1-\kappa_-+\kappa_+$ in order to guarantee uniform integrability
of the appearing terms in the series and thus uniformity of the estimate with respect to $t$. As there are only finitely many multi-indices involved, the constants in the estimate can be chosen independent of $\alpha$.
\end{proof}

\section{Energy and dispersive type estimates}\label{sec4}

\subsection{Energy estimates} Estimates for the $L^2$ norm of solutions follow directly from Theorems~\ref{thm:3.1} and~\ref{thm:3.5}. Let $\mu$ be defined by \eqref{eq:mu-def} 
and let $\kappa_+$ and $\kappa_-$ denote the constants from (A2). If $\sigma=1$ then the estimate 
\begin{equation}
   \|\mathcal E(t,0,\xi)\| \lesssim (1+t)^{-\min\{ \kappa_+, \mu\} }
\end{equation}
holds true and therefore any solution $U(t,x)$ of \eqref{eq:hyp-sys-1} satisfies the norm estimate
\begin{equation}
  \| U(t,\cdot)\|_2 \lesssim  (1+t)^{-\min\{ \kappa_+, \mu\} } \|U_0\|_2.
\end{equation}
If $\sigma>1$ we distinguish two cases. Either $\kappa_+<\mu$, then we obtain similarly to the above situation
\begin{equation}
  \| U(t,\cdot)\|_2 \lesssim (1+t)^{-\kappa_+ } \|U_0\|_2
\end{equation}
while for $ \kappa_+\ge \mu$
\begin{equation}
  \| U(t,\cdot)\|_2 \lesssim_\epsilon (1+t)^{-\mu+\epsilon } \|U_0\|_2
\end{equation}
holds true for any $\epsilon>0$.

\begin{rem}
Note that in contrast to results based on the GECL property introduced in  \cite{HW09}  where one step of diagonalisation in the hyperbolic zone is sufficient to derive energy estimates and hence energy estimates essentially follow under a $C^1$ assumption for the coefficients, our energy estimates need a higher amount of smoothness depending on $\kappa_--\kappa_+$.
\end{rem}

\subsection{Dispersive type estimates} For dispersive type estimates the behaviour of high frequencies is of importance. First, we recall some basic facts and notations. Associated to the eigenvalues $\lambda_1(t,\xi)$, \ldots, $\lambda_d(t,\xi)$ of the hyperbolic principal symbol $A_{\rm hom}(t,\xi)$ we define the auxiliary functions
\begin{equation}\label{eq:4.5}
\vartheta_j(t,\xi) = \frac1t \int_0^t \lambda_j(\tau,\xi) \d \tau
\end{equation}
and families of slowness surfaces
\begin{equation}
  \Sigma_t^{(j)} = \{ \xi\in\R^n : \vartheta_j(t,\xi) = 1 \}.
\end{equation}
For fixed $t$ these surfaces are smooth (even algebraic if $A(t,\xi)$ is polynomial with respect to $\xi$) and disjoint. 
They determine the dispersive properties of the Fourier integral operators 
\begin{equation}
   u_0 \mapsto \int \mathrm e^{\j ( x\cdot\xi + t\vartheta_j(t,\xi))}a(t,\xi) \widehat u_0(\xi) \d\xi
\end{equation}
for given amplitudes supported within $\mathcal Z_{\rm hyp}(N)$ and satisfying
\begin{equation}
   |\D_\xi^\alpha a(t,\xi) | \lesssim_\alpha  |\xi|^{-|\alpha|},\qquad t\ge t_\xi.
\end{equation}
Details on these estimates can be found in \cite{RW14} and will be recalled below.

\subsubsection{Assuming convexity} For the following we assume that the surfaces $\Sigma_t^{(j)}$ are strictly convex for sufficiently large  $t$. We recall the notion of asymptotic contact indices for surfaces from \cite[Chapter 4.7]{RW14} and define
\begin{equation}
   \gamma_{\rm as} (\Sigma_t^{(j)}; t\to\infty) = \min\{ \gamma\ge 2 :  \liminf_{t\to\infty} \varkappa(\Sigma_t^{(j)},\gamma)>0 \},
\end{equation}
where
\begin{equation} 
   \varkappa(\Sigma,\gamma) = \min_{p\in\Sigma} \varkappa(\Sigma,\gamma;p) 
\end{equation}
and $\varkappa(\Sigma,\gamma;p)$ is defined locally around points $p$ of $\Sigma$ as follows. By translation and rotation we may assume that $p$ is the origin and $\Sigma$ is given as graph $(y,h(y))$
of a function $h:\Omega\to R$ and with $h(0)=0$ and $h'(0)=0$. Then 
\begin{equation}
\varkappa(\Sigma,\gamma;p) = \inf_{|\omega|=1} \sum_{j=2}^\gamma \big|\partial_\rho^j h(\rho\omega)|_{\rho=0} \big|.
\end{equation}
Lower bounds on $\varkappa(\Sigma,\gamma;p)$ are quantitative measures of contact of order $\gamma$ between $\Sigma$ and its tangent plane in $p$. They have been introduced by the author and M.~Ruzhansky in \cite{RW11} and \cite{R12} based on earlier works of M.~Sugimoto \cite{Sug94}.

\begin{thm}
Assume (A1) to (A4) and that all the slowness surfaces $\Sigma_t^{(j)}$ are strictly convex for large $t$ and denote $\gamma=\min_j \gamma_{\rm as}(\Sigma^{(j)}_t; t\to\infty)$. 
Then for $1<p\le2$, $pq=p+q$, $\epsilon>0$ and with regularity $r>(n-\frac{n-1}\gamma) (\frac1p-\frac1q)$ the following estimates hold true:
\begin{enumerate}
\item If $\mu>\kappa_+$ then 
\begin{equation}
  \| U(t,\cdot)\|_q \lesssim_{p,\epsilon} (1+t)^{-\kappa_+ +\epsilon - \frac{n-1}\gamma (\frac1p-\frac1q)} \|U_0\|_{p,r}.
\end{equation}
If in addition $F_0$ is independent of $\xi$ then the estimate is valid for $\epsilon=0$. 
\item If $\mu\le\kappa_+$ then
\begin{equation}
  \| U(t,\cdot)\|_q \lesssim_{p,\epsilon} (1+t)^{-\mu+\epsilon - \frac{n-1}\gamma (\frac1p-\frac1q)} \|U_0\|_{p,r}.
\end{equation}
If in addition $\sigma=1$ in (A3) and $F_0$ is independent of $\xi$ then the estimate is valid for $\epsilon=0$.
\end{enumerate}
The constants $\kappa_\pm$ are determined by (A2), the constant $\mu$ by \eqref{eq:mu-def} from (A3).
\end{thm}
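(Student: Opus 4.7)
The plan is to decompose $U$ via a zone-adapted cutoff and treat the pseudodifferential and hyperbolic regions by different methods. Choose $\chi\in C^\infty(\R_+\times\R^n)$ with $\chi\equiv 1$ on $\mathcal{Z}_{\rm hyp}(N)$ and $\chi\equiv 0$ on $\mathcal{Z}_{\rm pd}(N/2)$, and set $\widehat{U_{\rm hi}}(t,\xi) = \chi(t,\xi)\mathcal{E}(t,0,\xi)\widehat{U_0}(\xi)$, $U_{\rm lo} = U - U_{\rm hi}$. The low-frequency part will be estimated by Hausdorff--Young and the shrinking volume of the pseudodifferential zone; the high-frequency part uses the WKB product form of Theorem~\ref{thm:3.5} combined with the Fourier integral operator estimates on strictly convex slowness surfaces developed in \cite[Chapter~4]{RW14}.

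For the low-frequency contribution $\widehat{U_{\rm lo}}(t,\cdot)$ is supported in $|\xi|\lesssim(1+t)^{-1}$. Hausdorff--Young and Hölder yield
\begin{equation*}
\|U_{\rm lo}(t)\|_q \lesssim \|\mathcal{E}(t,0,\cdot)\|_{L^\infty(\mathcal{Z}_{\rm pd})}\bigl|\{|\xi|\lesssim(1+t)^{-1}\}\bigr|^{1/p-1/q}\|U_0\|_p,
\end{equation*}
and Theorem~\ref{thm:3.1} bounds the first factor by $(1+t)^{-\mu+\epsilon}$, producing $(1+t)^{-\mu+\epsilon-n(1/p-1/q)}\|U_0\|_p$. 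Since $n(\tfrac{1}{p}-\tfrac{1}{q})\ge\tfrac{n-1}{\gamma}(\tfrac{1}{p}-\tfrac{1}{q})$ and $\mu\ge\min(\mu,\kappa_+)$, this bound is in both cases stronger than the claimed rate.

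For the high-frequency contribution I would insert the product form \eqref{eq:Eprodform} and for each $j$ isolate the scalar exponential $\exp(\j\int_{t_\xi}^t\lambda_j(\tau,\xi)\,d\tau)$ coming from the $j$-th diagonal entry of $\widetilde{\mathcal{E}}_k$. Writing this as $\exp(\j t\vartheta_j(t,\xi))\exp(-\j\int_0^{t_\xi}\lambda_j(\tau,\xi)\,d\tau)$ — the second factor is uniformly bounded on $\mathcal{Z}_{\rm hyp}$ with $\xi$-derivatives of symbolic order $|\xi|^{-|\alpha|}$ by \eqref{eq:3:txi-der-est} — recovers the phase $x\cdot\xi + t\vartheta_j(t,\xi)$ from \eqref{eq:4.5}. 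Absorbing also the factor $\mathcal{E}(t_\xi,0,\xi)$ from the decomposition $\mathcal{E}(t,0,\xi) = \mathcal{E}(t,t_\xi,\xi)\mathcal{E}(t_\xi,0,\xi)$, whose $\xi$-derivatives satisfy $|\D_\xi^\alpha\mathcal{E}(t_\xi,0,\xi)|\lesssim (1+t_\xi)^{-\mu+\epsilon}|\xi|^{-|\alpha|}$ by the preceding derivative theorem, the resulting amplitudes $a_j(t,\xi)$ satisfy
\begin{equation*}
|\D_\xi^\alpha a_j(t,\xi)|\lesssim \bigl(\tfrac{1+t}{1+t_\xi}\bigr)^{-\kappa_+}(1+t_\xi)^{-\mu+\epsilon}|\xi|^{-|\alpha|},
\end{equation*}
modulo logarithmic factors $\log^{|\alpha|}((1+t)/(1+t_\xi))$ from $\xi$-derivatives of the $F_{k-1}$-exponential. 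Since $1+t_\xi\le 1+t$ on $\mathcal{Z}_{\rm hyp}(N)$ the $(t,\xi)$-prefactor collapses uniformly to $(1+t)^{-\min(\mu,\kappa_+)+\epsilon}$, and the FIO estimate for convex surfaces of asymptotic contact index $\gamma$ from \cite[Chapter~4]{RW14} yields
\begin{equation*}
\|U_{\rm hi}(t)\|_q\lesssim_{p,\epsilon}(1+t)^{-\min(\mu,\kappa_+)+\epsilon-\frac{n-1}{\gamma}(\frac{1}{p}-\frac{1}{q})}\|U_0\|_{p,r}
\end{equation*}
for $r > (n-\tfrac{n-1}{\gamma})(\tfrac{1}{p}-\tfrac{1}{q})$, which combines with the low-frequency bound to give the theorem in both cases.

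The main technical obstacle is verifying the uniform symbolic estimates for the FIO amplitudes after the $t_\xi$-dependent evaluation of $M^{-1}(t_\xi,\cdot)$, $N_k^{-1}(t_\xi,\cdot)$, the phase-shift $\exp(-\j\int_0^{t_\xi}\lambda_j\,d\tau)$, and the extra factor $\mathcal{E}(t_\xi,0,\xi)$, as well as the chain of Leibniz computations needed for higher derivatives. Tracking precisely where each $\epsilon$-loss enters is also essential to certify the removal conditions: the FIO amplitude acquires an $\epsilon$-loss only through $\xi$-derivatives of the $F_{k-1}$-exponential, which vanishes when $F_0$ is independent of $\xi$; in Case~(2) Theorem~\ref{thm:3.1} and the derivative estimate carry an additional $\epsilon$-loss that is absent only in the $\sigma=1$ situation, explaining the distinct removal conditions in the two cases.
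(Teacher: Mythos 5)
Your proposal follows essentially the same route as the paper: decompose by a zone cutoff, control the pseudodifferential region via the $L^\infty$ bound from Theorem~\ref{thm:3.1} together with the shrinking volume of $\mathcal Z_{\rm pd}(N)$, and handle the hyperbolic region by reading the WKB product form \eqref{eq:Eprodform} as a sum of Fourier integral operators with phases $x\cdot\xi+t\vartheta_j(t,\xi)$ and amplitudes built from the diagonalisers, $\mathcal Q_k$, the $F_{k-1}$-exponential, and $\mathcal E(t_\xi,0,\xi)$, then invoke the convex-surface FIO estimate from \cite[Chapter~4]{RW14}. The only cosmetic differences are that you derive the low-frequency $L^p$--$L^q$ bound directly by Hausdorff--Young and H\"older where the paper goes through the $L^1$--$L^\infty$ endpoint and interpolates with the energy estimate, and that you keep the $(1+t_\xi)$-dependent prefactor explicitly before collapsing it to $(1+t)^{-\min(\mu,\kappa_+)+\epsilon}$, which lets you handle the two cases in one stroke (the paper proves Case~1 and then notes Case~2 is analogous). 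Your diagnosis of where each $\epsilon$-loss enters — logarithms from $\xi$-derivatives of $\exp(\int F_0)$ in both cases, and the Hartman--Wintner loss from Theorem~\ref{thm:3.1} only when $\sigma>1$, which is only seen in Case~2 — matches the paper's removal conditions. The one part you correctly flag as not fully verified — the uniform symbolic estimates $|\D_\xi^\alpha a_j(t,\xi)|\lesssim|\xi|^{-|\alpha|}$ for the normalized amplitude after dividing out the $t$-dependent decay factor, which require the Leibniz bookkeeping of $\D_\xi^\alpha$ acting on $M^{-1}(t_\xi,\xi)$, $N_k^{-1}(t_\xi,\xi)$, $\mathcal E(t_\xi,0,\xi)$ via \eqref{eq:3:txi-der-est}, and the phase-shift $\exp(-\j\int_0^{t_\xi}\lambda_j\,d\tau)$ — is precisely what the paper compresses into the one-line claim on the $B_j$ and is indeed the technical core; the estimates listed in Theorem~\ref{thm:3.5} and the derivative theorem in Section~3.2 do supply the necessary ingredients, so the gap is a matter of detail rather than of idea.
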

\begin{proof} We show the first estimate.
The solution $U$ to \eqref{eq:hyp-sys-1} are represented as $\widehat U(t,\xi) = \mathcal E(t,0,\xi) \widehat U_0(\xi)$. We decompose $U$ into two parts by means of two smooth cut-off functions $\chi_{\rm pd}(t,\xi) = \chi(N^{-1}(1+t)|\xi|)$ and $\chi_{\rm hyp} = 1 - \chi_{\rm pd}$, where $\chi\in C_0^\infty(\R)$ is non-negative, bounded by $1$ and satisfies $\chi(s)=1$ for $|s|\le 1$ and $\chi(s)=0$ for $|s|\ge2$. Then 
\begin{equation}
\begin{split}
  \| \chi_{\rm pd}(t,\cdot) \widehat U(t,\cdot) \|_{1}  &= \int  \chi_{\rm pd} (t,\xi) \| \widehat U(t,\xi)\| \d\xi \\& \le C (1+t)^{-\mu+\epsilon} \int_{(1+t)|\xi|\le 2N}  \|\widehat U_0(\xi)\| \d\xi \\
  &\le  C (1+t)^{-\mu+\epsilon-n} \|\widehat U_0\|_\infty
\end{split}
\end{equation}
based on the support of $\chi_{\rm pd}$ and the pointwise estimate of $\mathcal E(t,0,\xi)$ from Theorem~\ref{thm:3.1}. In consequence we obtain
\begin{equation}\label{eq:4.14}
   \| \chi_{\rm pd}(t,\D_x) U(t,\cdot)\|_\infty \le  C (1+t)^{-\mu+\epsilon-n} \|U_0\|_1.
\end{equation}
The treatment of $\chi_{\rm hyp}(t,\D_x)U$ is more subtle. Theorem~\ref{thm:3.5} together with  Proposition~\ref{prop:3.3} and Proposition~\ref{prop:3.4} implies the representation
\begin{equation}
    \chi_{\rm hyp}(t,\xi) \widehat U(t,\xi) = (1+t)^{-\kappa_+} \sum_{j=1}^d \mathrm e^{\j t\vartheta_j(t,\xi)} B_j(t,\xi) \widehat U_0(t,\xi)
\end{equation}
with the phase functions $\vartheta_j(t,\xi)$ from \eqref{eq:4.5} and matrices $B_j(t,\xi)$ arising as products of the diagonalisers, of $\mathcal Q_k(t,t_\xi,\xi)$,
the exponentials $\exp(\int_{t_\xi}^t F_{k-1}(\tau,\xi)\d\tau)$ and $\mathcal E(t_\xi,0,\xi)$. They   satisfy
\begin{equation}
   \| \D_\xi^\alpha B_j(t,\xi) \| \le C_\alpha |\xi|^{-|\alpha|} (\log(\mathrm e+t))^{|\alpha|}.
\end{equation}
The logarithmic term appears from derivatives of $\exp(\int_{t_\xi}^t F_0(\tau,\xi)\d\tau)$ and disappears if $F_0$ is independent of $\xi$. 

Finally, \cite[Theorem 4.7]{RW14} yields the desired decay estimates
\begin{equation}
\|\chi_{\rm hyp}(t,\D_x) U(t,\cdot)\|_\infty \le C (1+t)^{-\kappa_+ +\epsilon - \frac{n-1}\gamma} \|U_0\|_{B^r_{1,2}}
\end{equation} 
in terms of a Besov norm of the initial data with regularity $r=n-\frac{n-1}\gamma$. Combination with \eqref{eq:4.14} and interpolation with the already shown energy estimate yields the desired statement.

The second statement follows similarly with one difference. Here the decay in the zone $\mathcal Z_{\rm pd}(N)$ is slower and thus determines the final result.
\end{proof}

The second part of the result, where the zone $\mathcal Z_{\rm pd}(N)$ determines the decay order, can be further improved by assuming moment and decay conditions on the data in analogy to the corresponding statement of \cite{NW15}.

\subsubsection{No convexity assumption} Without assuming convexity for the slowness surfaces $\Sigma_t^{(j)}$ the resulting dispersive properties are essentially one-dimensional. We recall the non-convex asymptotic contact indices
\begin{equation}
   \gamma_{0,\rm as} (\Sigma_t^{(j)}; t\to\infty) = \min\{ \gamma\ge 2 :  \liminf_{t\to\infty} \varkappa_0(\Sigma_t^{(j)},\gamma)>0 \}.
\end{equation}
where now
\begin{equation} 
   \varkappa_0(\Sigma,\gamma) = \min_{p\in\Sigma} \varkappa_0(\Sigma,\gamma;p),\qquad \varkappa_0(\Sigma,\gamma;p) = \sup_{|\omega|=1} \sum_{j=2}^\gamma \big|\partial_\rho^j h(\rho\omega)|_{\rho=0} \big|.
\end{equation}
The proof of the following statement is analogous to the convex situation just replacing the abstract result on Fourier integral operators for convex slowness surfaces by the corresponding result for general non-convex surfaces. Of course both situations can be mixed if some of the surfaces are convex. The overall decay is determined by the slowness surface implying the lowest decay rate.

\begin{thm}
Assume (A1) to (A4) and denote by $\gamma=\min_j \gamma_{0,\rm as}(\Sigma^{(j)}_t; t\to\infty)$ the minimal non-convex asymptotic contact index of the slowness surfaces
$\Sigma_t^{(j)}$. Then for $1<p\le2$, $pq=p+q$, $\epsilon>0$ and with regularity $r>(n-\frac1{\gamma_0}) (\frac1p-\frac1q)$ the following estimates hold true. 
\begin{enumerate}
\item 
If  $\mu >  \kappa_+$ then 
\begin{equation}
  \| U(t,\cdot)\|_q \lesssim_{p,\epsilon} (1+t)^{-\kappa_++\epsilon- \frac{1}{\gamma_0} (\frac1p-\frac1q)} \|U_0\|_{p,r}.
\end{equation}
If in addition $F_0$ is independent of $\xi$ then the estimate is also valid for $\epsilon=0$. 
\item 
If  $\mu \le  \kappa_+$ then 
\begin{equation}
  \| U(t,\cdot)\|_q \lesssim_{p,\epsilon} (1+t)^{-\mu+\epsilon - \frac{1}{\gamma_0} (\frac1p-\frac1q)} \|U_0\|_{p,r}.
\end{equation}
If in addition $\sigma=1$ in (A3) and $F_0$ is independent of $\xi$ then the estimate is also valid for $\epsilon=0$. 
\end{enumerate}
The constants $\kappa_\pm$ are determined by (A2), the constant $\mu$ by \eqref{eq:mu-def} from (A3).
\end{thm}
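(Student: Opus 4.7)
The plan is to mirror the proof of the convex dispersive estimate, replacing the Fourier integral operator bound for strictly convex slowness surfaces by its non-convex counterpart. I would first decompose the solution via smooth cut-offs $\chi_{\rm pd}(t,\xi)=\chi(N^{-1}(1+t)|\xi|)$ and $\chi_{\rm hyp}(t,\xi)=1-\chi_{\rm pd}(t,\xi)$, treat the two zones separately, and finally interpolate with the $L^2$ energy estimate.

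For the pseudo-differential zone I would combine Theorem~\ref{thm:3.1} with the support constraint $(1+t)|\xi|\lesssim N$: integrating the pointwise bound $\|\mathcal E(t,0,\xi)\|\lesssim_{\epsilon,N}(1+t)^{-\mu+\epsilon}$ against $\|\widehat U_0\|_\infty$ over a ball of radius $\asymp (1+t)^{-1}$ produces an additional volume factor $(1+t)^{-n}$, and hence
\[
   \|\chi_{\rm pd}(t,\D_x)U(t,\cdot)\|_\infty \lesssim_{\epsilon,N} (1+t)^{-\mu+\epsilon-n}\|U_0\|_1,
\]
with $\epsilon=0$ available when $\sigma=1$. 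For the hyperbolic zone, Theorem~\ref{thm:3.5} together with Propositions~\ref{prop:3.3} and~\ref{prop:3.4} supplies the representation
\[
   \chi_{\rm hyp}(t,\xi)\widehat U(t,\xi) = (1+t)^{-\kappa_+}\sum_{j=1}^d \mathrm e^{\j t\vartheta_j(t,\xi)} B_j(t,\xi)\widehat U_0(\xi),
\]
where the amplitudes $B_j$ satisfy $\|\D_\xi^\alpha B_j(t,\xi)\|\lesssim |\xi|^{-|\alpha|}(\log(\mathrm e+t))^{|\alpha|}$, with the logarithmic factor absent when $F_0$ is independent of $\xi$. I would then invoke the non-convex analogue of \cite[Theorem 4.7]{RW14}, i.e.~the Fourier integral operator decay estimate attached to slowness surfaces governed by the non-convex asymptotic contact index as developed in \cite{RW11} and \cite{R12}, to obtain an $L^1\to L^\infty$ bound of order $(1+t)^{-\kappa_++\epsilon-\frac{1}{\gamma_0}}$ at the cost of Besov regularity $r=n-\frac{1}{\gamma_0}$ on the data. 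The logarithmic $\xi$-derivative loss carried by the $B_j$ is absorbed into the arbitrarily small $\epsilon$ by the same argument as in the convex case, and disappears altogether when $F_0$ is $\xi$-independent.

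Combining the two zones yields an $L^1\to L^\infty$ estimate with rate equal to the minimum of the two exponents appearing above, and Riesz--Thorin interpolation with the $L^2\to L^2$ energy estimate of Section~\ref{sec4} produces the full $L^p\to L^q$ bound together with the factor $(\frac1p-\frac1q)$ in the decay rate and the corresponding reduction of the regularity requirement to $r>(n-\frac{1}{\gamma_0})(\frac1p-\frac1q)$. The two cases of the statement simply record which of the two zone-exponents is the smaller: if $\mu>\kappa_+$ the hyperbolic zone dominates and $\kappa_+$ appears in the rate, while if $\mu\le\kappa_+$ the pseudo-differential zone dominates and $\mu$ does, with the $\epsilon$-loss disappearing in the latter case when additionally $\sigma=1$ in (A3).

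The main obstacle is essentially bookkeeping: verifying that the non-convex FIO estimate from \cite{RW11,R12} accommodates the precise symbol class with the mild logarithmic loss in the amplitudes $B_j$, and that the amplitudes, which are built from the diagonalisers $M$, $N_k$, the bounded matrix $\mathcal Q_k(t,t_\xi,\xi)$, the exponential $\exp(\j\int_{t_\xi}^t F_{k-1}(\tau,\xi)\d\tau)$ and $\mathcal E(t_\xi,0,\xi)$, satisfy the required symbolic bounds up to order $|\alpha|\le n$. This has already been carried out for the convex case and transfers without modification, so no fundamentally new estimate is required.
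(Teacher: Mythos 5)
Your proposal is correct and takes essentially the same route the paper indicates: the paper explicitly remarks that the non-convex case follows by repeating the proof of the convex theorem verbatim, replacing only the Fourier integral operator bound for strictly convex slowness surfaces by the one-dimensional (non-convex) van der Corput estimate from \cite{RW11,R12}, so that $\frac{n-1}{\gamma}$ becomes $\frac{1}{\gamma_0}$ and the regularity threshold becomes $n-\frac{1}{\gamma_0}$. Your zone decomposition, pointwise $\mathcal Z_{\rm pd}$ estimate, representation of $\chi_{\rm hyp}\widehat U$ with amplitudes $B_j$ carrying at most a logarithmic $\xi$-derivative loss, absorption of that loss into $\epsilon$, and final interpolation with the $L^2$ energy bound all match the intended argument.
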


\section{Coming back to our examples}\label{sec5}

In this final section we turn back to our motivating examples and provide some particular results following from the general theory. We restrict to specific examples to make the advantage of our approach more transparent.

\subsection{Differential symmetric hyperbolic systems} Let $A_j\in\mathcal T\{0\}\otimes\C^{d\times d}$ be self-adjoint matrices such that $A(t,\xi)$ defined by \eqref{eq:A-pol} has uniform in $t$ distinct real eigenvalues for all non-zero real $\xi$ and $B\in\mathcal T\{1\}\otimes\C^{d\times d}$ such that 
\begin{equation}\label{eq:5.1}
   \int_1^\infty \| tB(t) - B_\infty\|^\sigma \frac{\d t}{t} < \infty
\end{equation}
for some $B_\infty\in\C^{d\times d}$ and some number $\sigma\ge1$. Then assumptions (A1) to (A4) are clearly satisfied for
\begin{equation}\label{eq:5.2}
    \D_t U = \sum_{j=1}^n A_j(t) \D_{x_j} U + B(t) U,\qquad\qquad U(0,\cdot)=U_0,
\end{equation}
 and our theory is applicable.  It turns out that large-time estimates for solutions to \eqref{eq:5.2} depend mainly on the properties of $B_\infty$.  
 Indeed, as shown in \cite[Lemma~4.2]{RW14} and \cite[Prop.~4 and Remark~5]{RW11}, there exists a family $M(t,\xi)$ of unitary matrices diagonalising $A(t,\xi)$, satisfying $M,M^*\in\mathcal S\{0,0\}$ and $\Im\diag((\D_t M^*)M)=0$. Hence, modulo integrable terms we obtain in the hyperbolic zone $\mathcal Z_{\rm hyp}(N)$ the representation $F_0(t,\xi) = t^{-1}\diag M^*(t,\xi)B_\infty M(t,\xi)$ and therefore the diagonal entries of $t F_0(t,\xi)$ must be elements of the numerical range
\begin{equation}
    \mathcal W(B_\infty) = \{ v^* B_\infty v : v\in\C^{d}, \|v\|=1\}
\end{equation}
of the matrix $B_\infty$. Hence, using
\begin{equation} 
\kappa_+ = \min \Im \mathcal W(B_\infty) = \min \spec \Im B_\infty
\end{equation} 
and 
\begin{equation}
\kappa_- = \max \Im \mathcal W(B_\infty) = \max \spec \Im B_\infty
\end{equation}
we obtain the estimates
\begin{equation}
  \left\| \exp\left(\j \int_s^t F_0(\tau,\xi) \d\tau\right)\right\| \lesssim_N \left(\frac{1+t}{1+s}\right)^{-\kappa_+},\qquad t\ge s\ge t_\xi,
\end{equation}
and
\begin{equation}
  \left\| \exp\left(\j \int_s^t F_0(\tau,\xi) \d\tau\right)\right\| \lesssim_N \left(\frac{1+t}{1+s}\right)^{\kappa_-},\qquad s\ge t\ge t_\xi.
\end{equation}
This yields the corresponding estimate for the fundamental solution $\mathcal E(t,s,\xi)$ in the hyperbolic zone $\mathcal Z_{\rm hyp}(N)$. In the pseudo-differential zone $\mathcal Z_{\rm pd}(N)$ the hyperbolic principal symbol $A_{\rm hom}(t,\xi)$ plays no role as
\begin{equation}
  \int_1^{t_\xi} \| tA_{\rm hom}(t,\xi)\| \frac{\d t}{t} \le c |\xi| \int_1^{t_\xi} \d t \le c |\xi| t_\xi \le c N
\end{equation}
and (A3) follows directly from \eqref{eq:5.1}. Hence, we obtain within $\mathcal Z_{\rm pd}(N)$ for $\sigma=1$
\begin{equation}
   \|\mathcal E(t,s,\xi)\|\lesssim_N \left(\frac{1+t}{1+s}\right)^{-\mu}
\end{equation}
with $\mu = \min\Im\spec B_\infty \ge \kappa_+$ such that the resulting estimate is an energy decay estimate with rate $t^{-\kappa_+}$. 
For $\sigma>1$ and $\mu=\kappa_+$ we lose a small amount of decay, otherwise the same estimate holds true.

Dispersive estimates depend in the behaviour of the matrices $A_j(t)$. For the following we assume that the limits $\lim_{t\to\infty} A_j(t) = A_j(\infty)$ exist. This implies that the (by assumption uniformly distinct and real) eigenvalues $\lambda_k(t,\xi)$ of $A(t,\xi) = \sum_j A_j(t)\xi_j$ converge to the eigenvalues $\lambda_k(\xi)$ of $A(\xi)=\sum_j A_j(\infty)\xi_j$ and therefore
\begin{equation}\label{eq:5.5}
   \vartheta_k(t,\xi) = \frac1t \int_0^t \lambda_k(\tau,\xi)\d\tau \to \lambda_k(\xi),\qquad t\to\infty.
\end{equation}
This implies that the slowness surfaces $\Sigma_t^{(k)}$ converge to the surfaces
\begin{equation}
	\Sigma^{(k)} = \{\xi\in\R^n : \lambda_k(\xi)=1\}.
\end{equation}
The convergence in \eqref{eq:5.5} is locally uniform for all $\xi$-derivatives as long as $\xi\ne0$. Therefore the surfaces $\Sigma_t^{(k)}$ converge in $C^\infty$ 
and in order to apply the results of \cite[Chapter 4.7]{RW14} it suffices to consider the surfaces $\Sigma^{(k)}$. 

\begin{thm}
Assume all the surfaces $\Sigma^{(k)}$ are strictly convex and let
\begin{equation}
  \gamma = \max_{k=1,\ldots,d} \gamma(\Sigma^{(k)}). 
\end{equation}
Then solutions to \eqref{eq:5.2} satisfy the estimate
\begin{equation}
   \| U(t,\cdot)\|_q \lesssim_{p,\epsilon} (1+t)^{-\kappa_++\epsilon-\frac{n-1}\gamma (\frac1p-\frac1q)} \|U_0\|_{p,r} 
\end{equation}
for $1<p\le 2$, $pq=p+q$ and regularity $r>(n-\frac{n-1}\gamma) (\frac 1p-\frac1q)$.
 If $\sigma=1$ or $\kappa_+<\mu$ we may choose $\epsilon=0$, otherwise the estimate holds true for arbitrary $\epsilon>0$.
\end{thm}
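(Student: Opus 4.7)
The plan is to reduce the statement to a direct application of the convex dispersive estimate of Section~\ref{sec4}, since almost all of the parameter identifications have been carried out in the paragraphs immediately preceding the theorem. Assumptions (A1)--(A4) are already essentially verified there: (A1) holds because $A_{\rm hom}(t,\xi)=\sum_j A_j(t)\xi_j$ is uniformly strictly hyperbolic and $A-A_{\rm hom}=B\in\mathcal S\{0,1\}$; assumption (A2) holds with
\[
\kappa_+=\min\spec\Im B_\infty,\qquad \kappa_-=\max\spec\Im B_\infty
\]
by the numerical range argument combined with the unitarity of the diagonaliser $M(t,\xi)$; (A3) follows directly from \eqref{eq:5.1} with the constant diagonaliser of $B_\infty$ playing the role of $\tilde M$; and (A4) is a routine consequence of the $\mathcal T$-class memberships of $A_j$ and $B$.

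Next I would verify that $\mu\ge\kappa_+$, so that we always fall into case~(1) of the Section~\ref{sec4} convex dispersive theorem. Because $\tilde M$ is constant, the asymptotic eigenvalues $\mu_j$ in \eqref{eq:mu-def} are precisely those of $B_\infty$, whence by Bendixson's theorem
\[
\mu=\min\Im\spec B_\infty\ge\min\spec\Im B_\infty=\kappa_+.
\]
That theorem then delivers the exponent $-\kappa_++\epsilon-\frac{n-1}{\gamma}(\frac1p-\frac1q)$ together with the regularity loss $r>(n-\frac{n-1}\gamma)(\frac1p-\frac1q)$. The permissibility of $\epsilon=0$ under the hypothesis ``$\sigma=1$ or $\kappa_+<\mu$'' follows by tracing the two sources of an $\epsilon$-loss in the Section~\ref{sec4} proof: the pseudo-differential zone loses $\epsilon$ only when $\sigma>1$ (Hartman--Wintner iteration), while the hyperbolic zone loses $\epsilon$ only in the borderline case $\mu=\kappa_+$, where the pseudo-differential bound does not dominate. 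Each of the stated alternatives kills exactly one of the two sources without leaving the other active.

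The last ingredient is the geometric identification between the time-dependent slowness surfaces $\Sigma_t^{(k)}$ and their limits $\Sigma^{(k)}$. Here I would invoke the locally uniform $C^\infty$ convergence $\vartheta_k(t,\cdot)\to\lambda_k$ away from $\xi=0$ recorded in \eqref{eq:5.5}. By strict convexity of $\Sigma^{(k)}$ and this convergence, $\Sigma_t^{(k)}$ is strictly convex for sufficiently large $t$, and the quantitative contact measure satisfies $\varkappa(\Sigma_t^{(k)},\gamma;p)\to\varkappa(\Sigma^{(k)},\gamma;p)$ uniformly along the surfaces, yielding
\[
\gamma_{\rm as}(\Sigma_t^{(k)};t\to\infty)=\gamma(\Sigma^{(k)}).
\]
Taking the worst (flattest) surface corresponds to the largest contact index, producing the $\max_k$ in the statement. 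Combining with the previous two steps finishes the proof.

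The main obstacle I anticipate is bookkeeping rather than conceptual: tracking the $C^\infty$ convergence of the slowness surfaces uniformly over $p\in\Sigma_t^{(k)}$ so that the asymptotic contact indices transfer cleanly, and verifying that the two mechanisms creating an $\epsilon$-loss in the Section~\ref{sec4} theorem indeed switch off under the respective alternative hypotheses. Neither step is substantially more involved than the static analogue of \cite[Chapter~4.7]{RW14}, so once the bookkeeping is done the desired estimate is immediate.
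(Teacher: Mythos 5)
Your overall strategy is the right one and matches what the paper does implicitly: verify (A1)--(A4) using the material in the paragraphs preceding the theorem, establish $\mu\ge\kappa_+$ (so that we are always in case (1) of the convex dispersive theorem, possibly at the boundary), transfer the asymptotic contact index from $\Sigma_t^{(k)}$ to the limit surface $\Sigma^{(k)}$ via the $C^\infty$ convergence recorded around \eqref{eq:5.5}, and then read off the estimate. Your Bendixson argument for $\mu=\min\Im\spec B_\infty\ge\min\spec\Im B_\infty=\kappa_+$ is exactly the inequality the paper uses, and the observation that the worst (flattest) surface carries the largest contact index, hence the $\max_k$, is also correct.

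The step that does not hold up is your tracing of the two $\epsilon$-losses. You attribute the hyperbolic-zone loss to ``the borderline case $\mu=\kappa_+$'', but that is not what the Section~\ref{sec4} theorem says. There the hyperbolic-zone loss is tied to $F_0$ \emph{depending on $\xi$}: the logarithmic factor $(\log(\mathrm e+t))^{|\alpha|}$ appearing in the derivative bounds for the amplitudes $B_j(t,\xi)$ comes from differentiating $\exp\bigl(\int_{t_\xi}^t F_0(\tau,\xi)\,\d\tau\bigr)$ in $\xi$, and the theorem removes the $\epsilon$ only under the hypothesis that $F_0$ is $\xi$-independent --- in both cases (1) and (2). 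In the differential system of Section~5.1 the subprincipal symbol is $F_0(t,\xi)=t^{-1}\diag\bigl(M^*(t,\xi)B_\infty M(t,\xi)\bigr)$, which depends on $\xi/|\xi|$ through the diagonaliser $M(t,\xi)$. Thus neither ``$\sigma=1$'' nor ``$\kappa_+<\mu$'' by itself switches off the hyperbolic-zone loss, and your claim that ``each of the stated alternatives kills exactly one of the two sources'' is not justified: under $\kappa_+<\mu$ the pseudo-differential source indeed becomes irrelevant, but the $\xi$-dependence of $F_0$ remains active. To obtain $\epsilon=0$ as stated in the theorem one needs an additional argument showing that this particular logarithmic loss is harmless in the symmetric differential setting --- for instance by exploiting that $F_0$ is here an $\mathcal S\{0,1\}$ symbol whose $\xi$-derivatives are integrable in a refined sense, or that the log can be traded for a slight increase in Sobolev regularity rather than in the decay rate --- and neither your proof nor the quoted Section~\ref{sec4} theorem supplies that step.

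Two smaller remarks: (i) note that the general theorem in Section~\ref{sec4} writes $\gamma=\min_j\gamma_{\rm as}(\Sigma_t^{(j)})$ whereas the decay rate is monotone decreasing in $\gamma$, so the reduction requires replacing $\min$ by $\max$ as you correctly do --- worth flagging that this is not a literal invocation of the stated theorem; (ii) the transfer of the asymptotic contact index needs the $C^\infty$ convergence to hold uniformly over the compact surfaces, which you acknowledge as bookkeeping; this is indeed what the paper's sentence ``it suffices to consider the surfaces $\Sigma^{(k)}$'' is shorthand for, so that part is fine.
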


\subsection{A particular symmetric hyperbolic system}
On $\R^n$ we consider the particular symmetric hyperbolic system
\begin{equation}
 \D_t U = \left( \begin{pmatrix} & |\D_x| \\ |\D_x| & \end{pmatrix} + \frac{\mathrm i}{1+t} \begin{pmatrix} b_1(t) & \\ & b_2(t) \end{pmatrix} \right) U
\end{equation}
with real-valued coefficients $b_1,b_2\in\mathcal T\{0\}$. Examples of such coefficients could be oscillating like $b_j(t) = \sin(\alpha_j \log(t)+\beta_j) + \gamma_j$ for coefficients $\alpha_j,\beta_j,\gamma_j\in\R$, $j=1,2$ and are therefore more general as the ones just considered. We assume that 
\begin{equation}\label{eq:5.2:weakdich}
  t\mapsto  \int_1^t \big(b_1(\tau) - b_2(\tau) \big) \frac{\d\tau}{\tau}  
\end{equation}
is uniformly in $t\ge1$ bounded from below or from above. Then for this system assumptions (A1) to (A4) are all satisfied. Indeed, with
\begin{equation}
   A(t,\xi) = \begin{pmatrix}  & |\xi| \\ |\xi| & 0 \end{pmatrix} + \frac{\mathrm i}{1+t} \begin{pmatrix} b_1(t) & \\ & b_2(t) \end{pmatrix} \in \mathcal S\{1,0\} + \mathcal S\{0,1\}
\end{equation}
and the fact that the principal part is independent of $t$ we see that in (A2) the hyperbolic subprincipal symbol is independent of $\xi$. It is given by 
\begin{equation}
F_0(t,\xi)=\frac{\mathrm i}{1+t} \frac{b_1(t)+b_2(t)}2 \mathrm I.
\end{equation} 
Furthermore, in (A3) the diagonaliser $\tilde M$ can be chosen as identity matrix and the large-time principal part is determined as $\Lambda(t) = \mathrm i (1+t)^{-1} \diag(b_1(t),b_2(t))$.
The weak dichotomy condition is equivalent to the one-sided boundedness of \eqref{eq:5.2:weakdich}. Thus (A3) holds true with $\sigma=1$. 

The decay rate in the zone $\mathcal Z_{\rm pd}(N)$ is thus determined by the minimum of the numbers
\begin{equation}
   \mu_j = \liminf_{t\to\infty} \frac{\int_1^t b_j(\tau)\frac{\d\tau}{\tau}} {\log t}  \in\bigg[ \liminf_{t\to\infty} b_j(t), \limsup_{t\to\infty} b_j(t) \bigg], 
\end{equation}
while the decay in $\mathcal Z_{\rm hyp}(N)$ is determined by their arithmetic mean $(\mu_1+\mu_2)/2$ and thus stronger. If we denote $\mu=\min\{\mu_1,\mu_2\}$ then the main result of this paper implies the dispersive type estimate
\begin{equation}
   \| U(t,\cdot)\|_q \lesssim_p (1+t)^{-\mu - \frac{n-1}2 (\frac 1p-\frac1q)} \| U_0 \|_{p,r}
\end{equation}
for $1<p\le 2$, $pq=p+q$ and $r> \frac{n+1}2 (\frac1p-\frac1q)$.

\subsection{A wave equation with weak dissipation}
Wave equations with weak and scale invariant dissipation 
\begin{equation}
   u_{tt} - \Delta u + b(t) u_t = 0,\qquad  u(0,\cdot) = u_0,\quad u_t(0,\cdot)=u_1
\end{equation}
have been considered by the author in \cite{Wir04} and \cite{Wir06}. The approach of \cite{Wir06} allows for real-valued $b\in\mathcal T\{1\}$ such that
\begin{equation}
\limsup_{t\to\infty} tb(t) < 1
\end{equation}
holds true. The present paper allows to overcome this restriction and to consider {\em arbitrary} non-negative $b\in\mathcal T\{1\}$ and to derive dispersive type estimates for solutions. Of interest is the number
\begin{equation}
 \mu =  \liminf_{t\to\infty} \frac{\int_1^t b(\tau)\d\tau}{\log t} 
\end{equation}
and considerations boil down to the question whether $\mu\le 2$ or $\mu>2$. 

Rewriting the problem as system of first order in $\widehat U=(|\xi| \widehat u, \D_t \widehat u)^\top$, we obtain 
\begin{equation}
 A(t,\xi) = \begin{pmatrix} & |\xi| \\ |\xi| & \end{pmatrix} + \mathrm i b(t) \begin{pmatrix} 0 & 0 \\ 0  & 1\end{pmatrix} \in \mathcal S\{1,0\} + \mathcal S\{0,1\}
\end{equation}
and assumptions (A1) to (A4) are satisfied with $F_0(t,\xi) = \mathrm i b(t) /2 \mathrm I$ independent of $\xi$, $\tilde M =\mathrm I$ and for $\sigma=1$.  
The weak dichotomy condition is clearly satisfied for $b\ge0$ and the behaviour in $\mathcal Z_{\rm pd}(N)$ is determined from the numbers $0$ and $\mu$, 
while in $\mathcal Z_{\rm hyp}(N)$ the overall decay stems from $\mu/2$. 

We are not looking for estimates of $U$ in terms of $U_0$, but rather of norms for $\nabla u$ and $\partial_t u$ in terms of Sobolev norms for $u_0$ and $u_1$. This gives for the first component and small frequencies a further factor or $|\xi|$ on the Fourier side (cf. formula \eqref{eq:3.11} for $t\ge1$)
\begin{equation}
 \underbrace{ \begin{pmatrix}  \mathcal O(1) & \mathcal O(t^{-\mu}) \\   \mathcal O(1) & \mathcal O(t^{-\mu})  \end{pmatrix}}_{=(V_1(t,\xi)  | V_2(t,\xi))}
 \underbrace{ \begin{pmatrix}  \mathcal O(1) & \mathcal O(|\xi|) \\   \mathcal O(|\xi|) & \mathcal O(1)  \end{pmatrix} }_{=(V_1(1,\xi)  | V_2(1,\xi))^{-1}}
        \begin{pmatrix} |\xi| / \langle\xi\rangle \\ 1 \end{pmatrix}\qquad \text{in $\mathcal Z_{\rm pd}(N)$}
\end{equation}
and hence additional decay of one order in the first component.  We distinguish two cases. If $\mu\le2$, then the estimate
\begin{equation}
   \| \nabla u(t,\cdot)\|_q + \|\partial_t u(t,\cdot)\|_q \lesssim_q (1+t)^{-\frac\mu2 - \frac{n-1}2 (\frac1p-\frac1q)} \big(\|u_0\|_{p,r+1} + \|u_1\|_{p,r}\big)
\end{equation}
holds true for $1<p\le 2$, $pq=p+q$ and with $r=\frac{n+1}2(\frac1p+\frac1q)$. The decay is determined by the behaviour of large frequencies.
If $\mu>2$, then the estimate
\begin{equation}
   \| \nabla u(t,\cdot)\|_q + \|\partial_t u(t,\cdot)\|_q \lesssim_q (1+t)^{-1 - \frac{n-1}2 (\frac1p-\frac1q)} \big(\|u_0\|_{p,r+1} + \|u_1\|_{p,r}\big)
\end{equation}
holds true under the same conditions and the decay is determined by the behaviour of small frequencies.

\subsection{Klein--Gordon equations with time-dependent potential}
Next we consider Klein--Gordon equations with $t$-dependent mass term,
\begin{equation}\label{eq:5.2.1}
   u_{tt} - \Delta u + \frac{m^2(t)}{(1+t)^2} u = 0,\qquad u(0,\cdot) = u_0,\quad u_t(0,\cdot)=u_1.
\end{equation}
We assume that $m\in\mathcal T\{0\}$ is strictly non-negative. The scale-invariant case $m(t) = m_0$  has been treated by C.~B\"ohme and M.~Reissig in \cite{BR12} by means of special functions. Faster decaying coefficients with $(t\mapsto m(t)/(1+t)) \in\mathrm L^1(\R_+)$ appear in C.~B\"ohme's PhD thesis \cite{B11}, while coefficients with slower decay rate appear in the treatment of \cite{BR13}. Our approach allows us to provide results for perturbations of the scale-invariant case. 

To be precise, we assume that $m\in\mathcal T\{0\}$ satisfies
\begin{equation}
     \int_1^\infty | m(t) - m_0 |^\sigma \frac{\d t}{t} < \infty 
\end{equation}  
for some $m_0>0$ and a number $\sigma\ge1$. As example we could consider 
\begin{equation}
  m(t) =  m_0 + \sum_{j=0}^\infty \alpha_j \psi(2^{-j} t) 
\end{equation}
for some $\psi\in\mathrm C_c^\infty(\R)$ with $\mathrm{supp}\, \chi\subset[1,2]$ subject to $\int_1^2 \psi(s)\frac{\d s}{s}=1$ and a sequence $(\alpha_j)\in \ell^\sigma$. 
We apply a partial Fourier transform and rewrite \eqref{eq:5.2.1} as system of first order in the unknown
\begin{equation}
\widehat  U(t,\xi) = \begin{pmatrix}
      \sqrt{|\xi|^2+1/(1+t)^2} \,\, \widehat u(t,\xi) \\ \D_t \widehat u(t,\xi)
  \end{pmatrix}
\end{equation} 
such that
\begin{equation}
  \D_t \widehat U(t,\xi) = \begin{pmatrix} \frac{\mathrm i / (1+t)}{(1+t)^2|\xi|^2 + 1} & \sqrt{|\xi|^2+1/(1+t)^2} \\\\ \frac{|\xi|^2+m^2(t)}{\sqrt{|\xi|^2+1/(1+t)^2}} &  \end{pmatrix}\widehat U(t,\xi)  = A(t,\xi) \widehat U(t,\xi).
\end{equation}
The coefficient matrix $A(t,\xi)$ belongs to $\mathcal S\{1,0\}$. This follows from
\begin{equation}
\frac{|\xi|^2+m^2(t)}{\sqrt{|\xi|^2+1/(1+t)^2}} ,\; \sqrt{|\xi|^2+1/(1+t)^2} \in\mathcal S\{1,0\}
 \end{equation}
 and 
\begin{equation}\label{eq:5/.6}
   \frac{\mathrm i / (1+t)}{(1+t)^2|\xi|^2 + 1}   \in \mathcal S\{-2,3\}\subset\mathcal S\{0,1\}.
\end{equation}
Using that
\begin{equation}\label{eq:5/.7}
  \sqrt{|\xi|^2+1/(1+t)^2} - |\xi| , \;  \frac{|\xi|^2+m^2(t)}{\sqrt{|\xi|^2+1/(1+t)^2}} -|\xi| \in \mathcal S\{-1,2\}\subset \mathcal S\{0,1\} 
\end{equation}
we see that the hyperbolic principal part is given by
\begin{equation}
  A_{\rm hom}(t,\xi) = \begin{pmatrix} & |\xi| \\ |\xi| \end{pmatrix}
\end{equation} 
and thus \eqref{eq:5/.6} and \eqref{eq:5/.7} imply that $\kappa_\pm=0$ in (A2). Assumption (A3) is satisfied with constant large-time principal symbol if we use
\begin{equation}
A_\infty = \begin{pmatrix} \mathrm i & 1 \\ m_0 ^2 & 0 \end{pmatrix}
\end{equation}
and it is easy to check that (A4) also holds true for any number $N$.  In particular we see that $\spec A_\infty = \{ \mathrm i/2 \pm \sqrt{m_0^2-1/4} \}$.

The characteristic roots $\lambda_\pm(t,\xi) = \pm |\xi|$ are independent of $t$ and arise from the wave equation as principal part. Thus we obtain with
\begin{equation}
 \mu=\begin{cases} 
 \frac12, \quad &  m_0\ge \frac12,\\
\frac12+\sqrt{\frac14-m_0^2}, \quad &0<m_0<\frac 12
\end{cases}
\end{equation}
the dispersive type estimate 
\begin{multline}
   \| u_t (t,\cdot)\|_q + \| \sqrt{\Delta+1/(1+t)^2} u(t,\cdot)\|_q 
   \\
   \lesssim_{m,p,\epsilon} (1+t)^{\epsilon-\mu -\frac {n-1}2\left(\frac1p-\frac1q\right)} \big(\|u_0\|_{p,r+1} + \|u_1\|_{p,r}\big)
\end{multline}
for $1<p\le 2$, $pq=p+q$ and $r>\frac{n+1}2(\frac1p-\frac1q)$ generalising the results from \cite{BR12}.

\subsection{Models in cosmology} Equations with time-dependent coefficients arise in a natural way when studying the physics of an expanding universe. This has attracted considerable interest over the recent years, just to mention a few references we refer to the work of K.~Yagdjian, A.~Galstian and T.~Kinoshita \cite{GKY10, Yag13, GY15} and sketch the relation to our approach. Especially for the family of Friedmann--Lema\^itre--Robertson--Walker spacetimes on $\R^{1+n}$ with metrics of the form 
\begin{equation}
\d s^2 = - \d t^2 + \frac1{a^2(t)} \sum_{j=1}^n \d x_j^2,\qquad a(t) >0,
\end{equation}
in an expanding / shrinking universe the covariant Klein--Gordon equation for scalar fields takes the form
\begin{equation}
    u_{tt} - a^2(t) \Delta u -  n \frac{\dot a(t)}{a(t)} u_t + m_0 u = 0
\end{equation}
Here and later on $\dot a$ denotes the $t$-derivative of $a$. Particular cases contain the de Sitter model with $a(t) = \mathrm e^{-t}$, the Einstein--de Sitter model with $a(t)=(1+t)^{-2/3}$ or the anti-de Sitter model with $a(t)=\mathrm e^t$. A partial Liouville transform allows to reduce the 
equation to constant coefficients in the principal part. Indeed, let 
\begin{equation}
    A(t) = \int_0^t a(\theta) \d \theta 
\end{equation}
and denote by $\tau=A(t)$ the new time variable. Then a simple calculation yields for the new unknown $\widetilde u(\tau,x) = u(A^{-1}(\tau),x)$
the equation
\begin{equation}
   \widetilde u_{\tau\tau} - \Delta \widetilde u + b(\tau)\widetilde u_\tau + m(\tau) \widetilde u = 0,\qquad \tau\in[0,T],
\end{equation}
with variable mass and dissipation. The time $T$ is given by $T=\int_0^\infty a(\theta)\d\theta$ and may be finite. We are interested in the case when $T=\infty$ and thus excluding the de Sitter model. Such equations were considered in \cite{NW15} and we will only comment on the assumptions to be made on the coefficients. The dissipation term is given by
\begin{equation}
    b(\tau) =  (2-n) \frac{\dot a(A^{-1}(\tau))}{a^2(A^{-1}(\tau))}  
\end{equation}
and the mass term 
\begin{equation}
     m(\tau) =  \frac{m_0}{a^2(A^{-1}(\tau))}.
\end{equation}
In order for our approach to be applicable, we require that $b\in\mathcal T\{1\}$ and that $m\in\mathcal T\{2\}$. This can be expressed in terms of estimates for the coefficient function $a(t)$, its derivatives and the primitive $A(t)$. Indeed,  the estimate $|b(\tau)|\le C (1+\tau)^{-1}$ is equivalent to the requirement
\begin{equation}
    \frac{|\dot a (t)|}{a(t)} \le C \frac{a(t)}{1+A(t)},
\end{equation}
similarly $m(\tau)\le C (1+\tau)^{-2}$ reduces to 
\begin{equation}
 C\le   \frac{a(t)}{1+A(t)} \qquad\text{or}\qquad m_0=0.
\end{equation}
In order to obtain all estimates for derivatives we require in addition to these two estimates that all higher order derivatives satisfy
\begin{equation}
|  \partial_t^k a(t) | \le C_k a(t) \left( \frac{a(t)}{1+A(t)}\right)^k.
\end{equation}
This corresponds (in analogy) to the assumptions in \cite{RY00b} and immediately implies $b\in\mathcal T\{1\}$ and $m\in\mathcal T\{2\}$. Such estimates are satisfied 
both for the Einstein--de Sitter and the anti-de Sitter model. Indeed if $a(t) = (1+t)^\ell$ for some number $\ell>-1$ then
\begin{equation}
   A(t)  = \int_0^t a(\theta)\d\theta = \frac1{\ell+1} \left((1+t)^{\ell+1} - 1 \right),\qquad t=\left(1+(\ell+1)\tau\right)^{\frac1{\ell+1}},
\end{equation}
and hence
\begin{equation}
   b(\tau)  = (2-n)\ell  \left(1+\big(1+(\ell+1)\tau\big)^{\frac1{\ell+1}}\right)^{-(\ell+1)} \sim (2-n) \frac{\ell}{\ell+1} \tau^{-1} + \mathcal O(\tau^{-2})
\end{equation}
as $\tau\to\infty$. The mass term does not have enough decay for our theory,
\begin{equation}
   m(\tau)  = m_0  \left(1+\big(1+(\ell+1)\tau\big)^{\frac1{\ell+1}}\right)^{-2\ell} \sim m_0(\ell+1)^{-2\frac{\ell}{\ell+1}}  \tau^{-\frac{2\ell}{\ell+1}} + \mathcal O(\tau^{-\frac{2\ell}{\ell+1}-1}), 
\end{equation}
so we need to require $m_0=0$ (or else, we have to consider equations with an effective mass term). Similarly, we obtain for the anti-de Sitter model with $a(t)=\mathrm e^{t}$ and hence $A(t) = \mathrm e^t-1$
\begin{equation}
   b(\tau)=\frac{2-n}{1+\tau},\qquad\qquad m(\tau) = \frac{m_0}{(1+\tau)^2},
\end{equation}
which both behave in the right way and satisfy the assumptions of our theory. For the de Sitter model the transformation yields $b(\tau)=(n-2)/(1-\tau)$ and
$m(\tau)=m_0 / (1-\tau)^2$, but restricted to the interval $[0,1]$. 

\section*{References}

\end{document}